\newtheorem{Th}{Theorem}[section]
\newtheorem{Prop}[Th]{Proposition}
\newtheorem{Lem}[Th]{Lemma}
\newtheorem{Cor}[Th]{Corollary}
\newtheorem{Rem}[Th]{Remark}
\newenvironment{altproof}[1]
{\noindent
{\em Proof of {#1}}.}
{\nopagebreak\mbox{}\hfill $\Box$\par\addvspace{0.5cm}}
\newcommand{\wt}{\widetilde}
   \newcommand{\vp}{\varphi}
   \newcommand{\eps}{\varepsilon}
   \def\div{\mathop{\mathrm{div}\,}}
   \def\id{\mathrm{id}}
    \def\O{\mathrm{O}}
   \def\Z{\mathbb{Z}}
   \def\N{\mathbb{N}}
   \def\R{\mathbb{R}}
   \def\curl{\mathrm{curl}}
   \def\cl{\mathrm{cl\,}}
   \def\V{\mathcal{V}}
   \def\W{\mathcal{W}}
   \def\C{\mathbb{C}}
\newcommand{\cB}{{\mathcal B}}
\newcommand{\cC}{{\mathcal C}}
\newcommand{\cD}{{\mathcal D}}
\newcommand{\cE}{{\mathcal E}}
\newcommand{\cH}{{\mathcal H}}
\newcommand{\cK}{{\mathcal K}}
\newcommand{\cM}{{\mathcal M}}
\newcommand{\cN}{{\mathcal N}}
\newcommand{\cP}{{\mathcal P}}
\newcommand{\cT}{{\mathcal T}}
\newcommand{\cV}{{\mathcal V}}
\newcommand{\cW}{{\mathcal W}}
\newcommand{\fI}{I}
\newcommand{\fJ}{J}
\newcommand{\al}{\alpha}
\newcommand{\be}{\beta}
\newcommand{\ga}{\gamma}
\newcommand{\de}{\delta}
\newcommand{\la}{\lambda}
\newcommand{\si}{\sigma}
\newcommand{\om}{\omega}
\newcommand{\Ga}{\Gamma}
\newcommand{\Om}{\Omega}
\newcommand{\Si}{\Sigma}
\def\curlop{\nabla\times}
\newcommand{\weakto}{\rightharpoonup}
\newcommand{\pa}{\partial}
\def\id{\mathrm{id}}
\newcommand{\tX}{\widetilde{X}}
\newcommand{\tu}{\widetilde{u}}
\newcommand{\tv}{\widetilde{v}}
\newcommand{\tcV}{\widetilde{\cV}}
\newcommand{\cTto}{\stackrel{\cT}{\longrightarrow}}
\numberwithin{equation}{section}
\begin{document}
\title{Nonlinear time-harmonic Maxwell equations in an anisotropic bounded medium}
\author{Thomas Bartsch \and Jaros\l aw Mederski
\footnote{The author was partially supported by the National Science Centre, Poland (Grant No. 2014/15/D/ST1/03638).}
}
\date{}
\maketitle

\begin{abstract}
  We find solutions $E:\Om\to\R^3$ of the problem
  \[
  \left\{
  \begin{aligned}
  &\curlop(\mu(x)^{-1}\curlop E) - \om^2\eps(x) E = \pa_E F(x,E) &&\quad \text{in }\Om\\
  &\nu\times E = 0 &&\quad \text{on }\pa\Om
  \end{aligned}
  \right.
  \]
  on a bounded Lipschitz domain $\Om\subset\R^3$ with exterior normal $\nu:\pa\Om\to\R^3$. Here $\curlop$ denotes the curl operator in $\R^3$. The equation describes the propagation of the time-harmonic electric field $\Re\{E(x)e^{i\om t}\}$ in an anisotropic material with a magnetic permeability tensor $\mu(x)\in\R^{3\times3}$ and a permittivity tensor $\eps(x)\in\R^{3\times3}$. The boundary conditions are those for $\Om$ surrounded by a perfect conductor. It is required that $\mu(x)$ and $\eps(x)$ are symmetric and positive definite uniformly for $x\in\Om$, and that $\mu,\eps\in L^{\infty}(\Om,\R^{3\times 3})$. The nonlinearity $F:\Om\times\R^3\to\R$ is superquadratic and subcritical in $E$, the model nonlinearity being of Kerr-type: $F(x,E)=|\Ga(x)E|^p$ for some $2<p<6$ with $\Ga(x)\in GL(3)$ invertible for every $x\in\Om$ and $\Ga,\Ga^{-1}\in L^\infty(\Om, \R^{3\times 3})$. We prove the existence of a ground state solution and of bound states if $F$ is even in $E$. Moreover if the material is uniaxial we
find two types of solutions with cylindrical symmetries.
\end{abstract}

{\bf MSC 2010:} Primary: 35Q60; Secondary: 35J20, 58E05, 78A25

{\bf Key words:} time-harmonic Maxwell equations in anisotropic nonlinear media; uniaxial media; ground state; variational methods for strongly indefinite functionals

\section{Introduction}\label{sec:intro}

The paper is concerned with electromagnetic waves in an anisotropic, inhomogeneous and nonlinear medium $\Om$ in the absence of charges, currents and magnetization. In such a medium the constitutive relations between the electric displacement field $\cD$ and the electric field $\cE$ as well as between the magnetic induction $\cH$ and the magnetic field $\cB$ are given by
\[
\cD=\eps\cE +\cP_{NL} \quad\text{and}\quad \cB=\mu \cH,
\]
where $\eps$ is the (linear) permittivity tensor of the anisotropic material, and $\cP_{NL}$
stands for the nonlinear polarization. In anisotropic and inhomogeneous media $\eps$ depends on $x\in\Om$, and $\cP_{NL}$ depends on the direction of the vector $\cE=(\cE_1,\cE_2,\cE_3)$ and on $x\in\Om$. The permittivity tensor $\eps(x)\in\R^{3\times3}$ and the permeability tensor $\mu(x)\in\R^{3\times3}$ are assumed to be symmetric and uniformly positive definite for $x\in\Om$. The Maxwell equations
\begin{equation*}\label{eq:Maxwell}
\left\{
\begin{aligned}
    &\curlop \cH = \pa_t \cD, \quad \div(\cD)=0,\\
    &\pa_t \cB + \curlop \cE=0, \quad\div(\cB)=0,
\end{aligned}
\right.
\end{equation*}
together with the constitutive relations lead to the equation (see Saleh and Teich \cite{FundPhotonics})
$$
\curlop\left(\mu(x)^{-1}\curlop \cE\right)+\eps\partial_t^2 \cE = -\partial_t^2 \cP_{NL}.
$$
In the time-harmonic case the fields $\cE$ and $\cP$ are of the form
$\cE(x,t) = \Re\{E(x)e^{i\omega t}\}$, $\cP_{NL}(x,t) = \Re\{P(x)e^{i\omega t}\}$, with $E(x),P(x)\in\C^3$, so we arrive at the time-harmonic Maxwell equation
\begin{equation}\label{eq:main}
\curlop\left(\mu(x)^{-1}\curlop E\right) - V(x)E = f(x,E) \qquad\textnormal{in } \Om,
\end{equation}
where  $V(x) = \omega^2\eps(x)$ and $f(x,E)$ takes care of the nonlinear polarization. We consider nonlinearities of the form $f(x,E)=\pa_E F(x,E)$. In Kerr-like media one has
$$
F(x,E)=|\Gamma(x)E|^4
$$
with $\Ga(x)\in GL(3)$ invertible for every $x\in\Om$ and $\Ga,\Ga^{-1}\in L^{\infty}(\Om,\R^{3\times 3})$. This will be our model nonlinearity but we shall consider more general nonlinearities; see Section \ref{sec:results}.

The goal of this paper is to find solutions $E:\Om\to\R^3$ of \eqref{eq:main} together with the boundary condition
\begin{equation}\label{eq:bc}
\nu\times E = 0\qquad\text{on }\pa\Om
\end{equation}
where $\nu:\pa\Om\to\R^3$ is the exterior normal. This boundary condition holds when $\Om$ is surrounded by a perfect conductor.

Solutions of \eqref{eq:main} are critical points of the functional
\begin{equation}\label{eq:action}
\fJ(E) = \frac12\int_\Om\langle\mu(x)^{-1}\curlop E,\curlop E\rangle\, dx
          - \frac12\int_\Om \langle V(x)E, E\rangle\, dx - \int_\Om F(x,E)\,dx
\end{equation}
defined on an appropriate subspace $W^p_0(\curl;\Om)$ of $H_0(\curl;\Om)$; see Section~\ref{sec:results} for the definition of the spaces we work with. In the spirit of the Helmholtz decomposition any $E\in W^p_0(\curl;\Om)$ can be written as $E=v+w$ with $w$ irrotational, i.e.\ $\curlop w=0$, and $\div(V(x)v)=0$. The functional has the form
\begin{equation*}
\fJ(v+ w)
 = \frac12\int_\Om\langle\mu(x)^{-1}\curlop v,\curlop v\rangle\,dx
    - \frac12\int_\Om \langle V(x)(v+ w),v+ w\rangle\,dx - \int_\Om F(x,v+w)\,dx.
\end{equation*}
This functional is unbounded from above and from below, the curl operator has an infinite-dimensional kernel, and critical points have infinite Morse index. Although $\fJ$ has a linking geometry in the spirit of Benci and Rabinowitz \cite{BenciRabinowitz}, the problem cannot be treated by standard variational methods as in \cite{BenciRabinowitz,BartschDing,DingBook} due to a lack of compactness. The derivative $\fJ':W^p_0(\curl;\Om)\to \big(W^p_0(\curl;\Om)\big)^*$ is not weak-weak$^*$ continuous even when the growth of $F$ is subcritical.

In the literature there are only few results about nonlinear equations like \eqref{eq:main} involving the curl-curl operator. If $\Om=\R^3$ then Benci and Fortunato \cite{BenFor} proposed, within a unified field theory for classical electrodynamics, the equation
\begin{equation}\label{eq:BF}
\curlop\curlop A = W'(|A|)A
\end{equation}
for the gauge potential $A$ related to the magnetic field $H=\curlop A$. Azzollini et al.\ \cite{BenForAzzAprile} and D'Aprile and Siciliano \cite{DAprileSiciliano} used the symmetry of the domain $\R^3$ and of \eqref{eq:BF} in order to find special types of symmetric solutions. Symmetry also plays an important role in the paper \cite{Bartsch:2014} by Bartsch et al.\ which is concerned with the isotropic case on $\Om=\R^3$ where $\mu$ and $V$ are scalar, $F(x,E)=\Gamma(x)|E|^p$, $2<p<6$, with $V$ and $F$ being cylindrically symmetric, say functions of $\sqrt{x_1^2+x_2^2}$ and $x_3$, and periodic in $x_3$-direction. Mederski \cite{MederskiENZ} considered \eqref{eq:main} on $\Om=\R^3$ with $\mu$ being scalar and assuming that $V\in L^q(\R^3)$ for several values of $q$ which depend on the growth of $F(x,u)$ as $u\to0$ and $|u|\to\infty$. In \cite{MederskiENZ} it is also required that $F$ is $\Z^3$-periodic in $x$, not cylindrically symmetric. Cylindrically symmetric media have also been considered in the work of Stuart and Zhou \cite{StuartZhou96}--\cite{StuartZhou10} on transverse electric and transverse magnetic solutions. The search for these solutions reduces to a one-dimensional variational problem or an ODE, which simplifies the problem considerably.

We would also like to mention that linear time-harmonic Maxwell equations have been extensively studied by means of numerical and analytical methods, on bounded and unbounded (exterior) domains; see e.g.\ \cite{Ball2012,BuffaAmmariNed,Leis68,Picard01,KirschHettlich,Monk,Doerfler} and the references therein.

Equation \eqref{eq:main} in the nonsymmetric case and on a bounded domain has first been been studied by the authors in \cite{BartschMederski1} where we developed a critical point theory in order to find ground states and bound states for \eqref{eq:main}. There $\Om$ was required to be simply connected with connected $\cC^{1,1}$ boundary, hence diffeomorphic to the unit ball in $\R^3$. Moreover $\mu$ and $V$ had to be scalar and constant, i.e.\ only the isotropic case has been treated in \cite{BartschMederski1}. Concerning the nonlinearity a structural condition had to be assumed that is difficult to check even for sums of Kerr type nonlinearities. In the present paper we significantly improve the results from \cite{BartschMederski1} in several ways. In particular, there will be no restrictions on the topology of $\Om$, and we allow $\mu$ and $V$ to be non-isotropic tensors. Moreover, in an axisymmetric setting we also obtain the existence of solutions as in \cite{DAprileSiciliano} which has not been considered in \cite{BartschMederski1}. In addition, we are able to deal with nonlinearities that cannot be treated with the methods of \cite{BartschMederski1}. For instance we can allow that $F(x,E)=0$ if $|E|$ is small, modelling the case that the Kerr effect is linear for low intensities of the electric field $\cE$. We are also able to weaken or even to get rid of the severe structural restriction on $F$ mentioned above. In order to achieve this we refine the Nehari-Pankov manifold technique used in \cite{BartschMederski1}, obtain more careful estimates, and we introduce a new approach in a setting where the Nehari-Pankov manifold does not exist.

\section{Statement of results}\label{sec:results}

Throughout the paper we assume that $\Om\subset\R^3$ is a bounded domain with Lipschitz boundary. We begin with recalling the basic spaces in which we look for solutions of \eqref{eq:main}. The space
$$
H(\curl;\Om) := \{E\in L^2(\Om,\R^3): \curlop E \in L^2(\Om,\R^3)\}
$$
is a Hilbert space when provided with the graph norm
$$
\|E\|_{H(\curl;\Om)} := \left(|E|^2_2+|\curlop E|^2_2\right)^{1/2}.
$$
Here and in the sequel $|\cdot|_q$ denotes the $L^q$-norm. The curl of $E$, $\curlop E$, has to be understood in the distributional sense. The closure of $\cC^{\infty}_0(\Om,\R^3)$ in $H(\curl;\Om)$ is denoted by $H_0(\curl;\Om)$. There is a continuous tangential trace operator $\gamma_t:H(\curl;\Om)\to H^{-1/2}(\pa\Om)$ such that
$$
\gamma_t(E)=\nu\times E_{|\pa\Om}\qquad\text{for any $E\in \cC^{\infty}(\overline\Om,\R^3)$}
$$
and (see \cite[Theorem~3.33]{Monk})
$$
H_0(\curl;\Om)=\{E\in H(\curl;\Om): \gamma_t(E)=0\}.
$$
We also need the space
\[
\cV = \left\{v\in H_0(\curl;\Om): \int_\Om\langle V(x)v,\varphi\rangle\,dx=0
        \text{ for every $\varphi\in \cC^\infty_0(\Om,\R^3)$ with $\curlop\varphi=0$} \right\}.
\]

Now we state our hypotheses on the linear part of \eqref{eq:main}.
\begin{itemize}
\item[(L1)] $\mu,V\in L^{\infty}(\Omega,\R^{3\times 3})$, and $\mu(x),V(x)$ are symmetric and uniformly positive definite for $x\in\Om$.
\item[(L2)] $\cV$ is compactly embedded into $L^p(\Om,\R^3)$ for some $2<p<6$.
\end{itemize}
In the next section we present conditions on $\cV$ which imply (L2). An important role plays the curl-curl source eigenvalue problem
\begin{equation}\label{EgEigenvalue}
\left\{
\begin{array}{ll}
\curlop(\mu(x)^{-1}\curlop u) = \la V(x)u,\ \ \div(V(x)u)=0
&
\hbox{in } \Om,\\
\nu\times u =0
&
\hbox{on } \pa\Om.
\end{array}
\right.
\end{equation}
We need in particular the eigenspace for $\la=1$, i.e.\ the kernel of the operator $\curlop(\mu(x)^{-1}\curlop)-V(x)$ in $\cV$:
\[
\cV_0:=\{v\in\cV: v\text{ solves \eqref{EgEigenvalue} for }\la=1\}.
\]

Concerning the nonlinearity $f(x,E)=\pa_EF(x,E)$ we collect various assumptions that we shall use. The model nonlinearity $F(x,E)=|\Ga(x)E|^p$ with $2<p<6$ as in (L2) satisfies all hypotheses provided $\Ga(x)\in GL(3)$ and $\Ga,\Ga^{-1}\in L^\infty(\Om,\R^{3\times3})$. In applications, for low intensity $|\cE|$ of the electric field $\cE$, the Kerr effect is often considered to be linear, i.e.\ $\cP_{NL}=0$ for small $|\cE|$ (see \cite{MederskiENZ}). In order to model also this nonlinear phenomenon we consider nonlinearities of the form
\begin{itemize}
\item[(F0)] $F(x,u)=F_0(x,\chi(u))$ with
$\displaystyle
\chi(u)=
 \begin{cases}
 0 &\text{if } |u|\le\de,\\
 \left(1-\frac{\de}{|u|}\right)u &\text{if } |u|>\de,
 \end{cases}
$\qquad
for some $\de\ge0$.
\end{itemize}
Now we state our conditions on $F_0$.
\begin{itemize}
\item[(F1)] $F_0:\Om\times\R^3\to\R$ is differentiable with respect to $u\in\R^3$, such that $f_0=\pa_uF_0:\Om\times\R^3\to\R^3$ is a Carath\'eodory function (i.e.\ measurable in $x\in\Om$, continuous in $u\in\R^3$ for a.e.\ $x\in\Om$). Moreover, $F_0(x,0)=0$ for a.e.\ $x\in\Om$.
\item[(F2)] $|f_0(x,u)|=o(|u|)$ as $u\to0$ uniformly in $x\in\Om$.
\item[(F3)] There exists a constant $c>0$ such that
    \[|f_0(x,u)|\le c(1+|u|^{p-1})\qquad\text{for all } x\in\Om, u\in\R^3.\]
\end{itemize}
Observe that (F1)-(F3) also hold for $F$ as in (F0). These conditions are standard and yield in particular that solutions of \eqref{eq:main}, \eqref{eq:bc} can be obtained with variational methods. The next condition describes the growth of $F_0$ as $|u|\to\infty$.
\begin{itemize}
\item[(F4)] $F_0(x,u)\geq 0$ for $x\in\Om$, $u\in\R^3$ and there exists a constant $d>0$, such that
    $$
    \liminf_{|u|\to\infty}F_0(x,u)/|u|^p>d>0\hbox{ uniformly with respect to } x\in\Om.
    $$
\end{itemize}
The remaining conditions are of a structural nature. The next condition allows to introduce the Nehari-Pankov manifold and to define a ground state as minimizer of the energy functional on this manifold which has infinite dimension and infinite co-dimension). In order to formulate it we introduce the function
\[
\vp(t,x,u,v)
 := \frac{t^2-1}{2}\langle f_0(x,u),u\rangle + t\langle f_0(x,u),v\rangle + F_0(x,u)
     - F_0(x,tu+v)
\]
defined for $t\ge0$, $x\in\Om$, $u,v\in\R^3$.
\begin{itemize}
\item[(F5)] (i) For a.e.\ $x\in\Om$ and for all $t\ge0$, $u,v\in\R^3$ there holds $\vp(t,x,u,v)\le0$.

    (ii) For $t\ge0$, $u\in L^p(\Om)$ and $v\in\cV_0$ with $tu+v\ne u$ there holds $\int_\Om\vp(t,x,u,v)\,dx<0$.
\end{itemize}
The integral condition in (F5)(ii) is like a Landesman-Lazer condition which is used in asymptotically linear elliptic problems when the linearization at infinity has a kernel. It implies the following convexity condition for $F$ which is needed for the semicontinuity of the associated energy functional and for the linking geometry of $J$.

\begin{itemize}
\item[(F6)]
    \begin{itemize}
    \item[(i)] $F_0(x,u)$ is convex with respect to $u\in\R^3$ for a.e.\ $x\in\Om$.
    \item[(ii)] For every $u\in L^p(\Om)$ the functional
    \[
    \cV_0\to\R,\quad v\mapsto \int_\Om F_0(x,u+v)\,dx,
    \]
    is strictly convex.
    \end{itemize}
\end{itemize}

\begin{Rem}\label{rem:convexity} a) In order to see that (F5) implies (F6) fix $x\in\Om$, $u_0,u_1\in\R^3$, and consider the map $g(s):=F_0(x,(1-s)u_0+su_1)$. Then (F5)(i) with $t=1$, $u=(1-s)u_0+su_1$, $v=(t-s)(u_1-u_0)$ gives for $0\le s<t\le 1$:
\[
\begin{aligned}
g'(s)(t-s) &= \langle f_0((1-s)u_0+su_1),(t-s)(u_1-u_0)\rangle\\
 &\le F_0((1-t)u_0+tu_1) - F_0((1-s)u_0+su_1) = g(t)-g(s)
\end{aligned}
\]
This implies the convexity of $g$, hence (F6)(i). Similarly one sees that (F5) implies (F6)(ii).

b) Condition (F5)(i) also implies that
\[
\langle f_0(x,u),u\rangle \ge 2F_0(x,u)
\]
for a.e.\ $x\in\Om$ and every $u\in\R^3$. Simply set $t=0$ and $v=0$ in (F5)(i).

c) Of course (F6) holds if $F_0$ is strictly convex in $u$ for a.e.\ $x\in\Om$. If (F6)(i) holds and $F_0(x,u)$ is strictly convex in $u$ for $x\in\Om_0$, $\Om_0\subset\Om$ some nonempty open subset, then (F6)(ii) follows provided the unique continuation principle for the time harmonic Maxwell equation $\curlop(\mu(x)^{-1}\curlop u) - V(x)u = 0$ holds. This is the case for large classes of potentials $V$ (see \cite{Okaji:2002, Vogelsang:1991}).

d) In \cite{BartschMederski1} we required the condition
\begin{itemize}
\item[(*)] If $\langle f(x,u),v\rangle = \langle f(x,v),u\rangle \neq 0$ then
    $$
    2(F(x,u)-F(x,v))\langle f(x,u),u\rangle
     \le \langle f(x,u),u\rangle^2-\langle f(x,u),v\rangle^2.
    $$
    If in addition $F(x,u)\ne F(x,v)$ then the strict inequality holds.
\end{itemize}
This condition is difficult to check and not needed any more.
\end{Rem}

If (F5) does not hold we require the following condition of Ambrosetti-Rabinowitz type.
\begin{itemize}
\item[(F7)] $F=F_0$ and there is $\gamma>2$ such that
    $\langle f_0(x,u),u\rangle \geq \ga F_0(x,u)$ for $u\in\R^3$.
\end{itemize}
We obtain solutions of our problem if (F1)-(F4), (F6)-(F7) hold. However, although we require $F=F_0$ it is possible that there exists a sequence of solutions $E_n$ with positive energy $J(E_n)\to0$, hence there may not exist a ground state as in the case of (F0)-(F5). By a ground state we mean a solution $E$ with positive energy $J(E)>0$ that has the least energy among all solutions with positive energy. Observe that if $\de>0$ in (F0) and if $\cV_0\ne\{0\}$ then any $E\in\cV_0$ with $|E|_\infty\le\de$ is a solution $E$ with $J(E)=0$. In order to obtain a ground state the following assumption will prove to be sufficient.
\begin{itemize}
\item[(F8)] There is $\eta\geq\gamma$ such that
    $\eta F_0(x,u)\geq \langle f_0(x,u),u\rangle>0$ for $u\in\R^3\setminus\{0\}$.
\end{itemize}

In order to state our results we introduce the space
$$
W^p(\curl;\Om) := \{E\in L^p(\Om,\R^3): \curlop E\in L^2(\Om,\R^3)\}\subset H(\curl;\Om)
$$
which is a Banach space if provided with the norm
$$
\|E\|_{W^p(\curl;\Om)} := \left(|E|^2_p+|\curlop E|^2_2\right)^{1/2}.
$$
We shall look for solutions of \eqref{eq:main} in the closure
$W^p_0(\curl;\Om)\subset H_0(\curl;\Om)$ of $\cC^{\infty}_0(\Om,\R^3)$ in $W^p(\curl;\Om)$. Observe that $\cV$ is a closed linear subspace of $W^p_0(\curl;\Om)$ as a consequence of (L2). Moreover, since for every $\varphi\in\cC^\infty_0(\Om;\R^3)$ the linear map
\[
E \mapsto \int_\Om \langle E,\curlop\varphi\rangle dx
\]
is continuous on $W^p_0(\curl;\Om)\subset H(\curl;\Om)$, the space
\[
\begin{aligned}
\cW
 &= \left\{w\in W^p_0(\curl;\Om):\int_\Om\langle w,\curlop\varphi\rangle = 0
     \text{ for all }\varphi\in\cC^\infty_0(\Om,\R^3)\right\}\\
 &= \{w\in W^p_0(\curl;\Om): \curlop w=0\}
\end{aligned}
\]
is a closed complement of $\cV$ in $W^p_0(\curl;\Om)$, hence there is a Helmholtz type decomposition $W^p_0(\curl;\Om) = \cV\oplus\cW$. Helmholtz decompositions hold in very general settings, even in higher dimensions and for exterior domains; see \cite{Pauly:2014} for recent results and references to the literature. Our first main result reads as follows.

\begin{Th}\label{thm:main}
Suppose (L1)-(L2) and (F0)-(F4) hold.

a) If (F5) holds then \eqref{eq:main} has a ground state solution $E\in W^p_0(\curl;\Om)$.

b) If (F6)-(F7) hold then \eqref{eq:main} has a nontrivial solution $E\in W^p_0(\curl;\Om)$. This is a ground state if also (F8) holds.

c) If (F5) or (F6)-(F7) hold, and if $F$ is even in $u$ then \eqref{eq:main} has a sequence of solutions $E_n$ with $J(E_n)\to\infty$.
\end{Th}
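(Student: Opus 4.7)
\medskip
\noindent\textbf{Setup.} I will work throughout with the Helmholtz-type splitting $W^p_0(\curl;\Om)=\cV\oplus\cW$ recalled in the excerpt. By (L2) the embedding $\cV\hookrightarrow L^p(\Om,\R^3)$ is compact and, via standard spectral theory, $\fJ''(0)$ restricted to $\cV$ has compact resolvent; hence $\cV=\cV^+\oplus\cV_0\oplus\cV^-$, with $\cV_0$ as in the excerpt, $\cV^\pm$ the positive/negative spectral subspaces, and $\cV^-\oplus\cV_0$ finite dimensional. On $\cW$ the quadratic part of $\fJ$ reduces to $-\tfrac12\int\langle Vw,w\rangle\,dx$ which is negative definite, so if I set $Y:=\cV^+$ and $Z:=\cV^-\oplus\cV_0\oplus\cW$ then $\fJ$ has linking geometry between $Y$ and $Z$. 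Under (F6)(ii) the map $w\mapsto\fJ(v+w)$ is strictly concave and coercive on $\cW$ for each $v\in\cV$, so it attains a unique maximum $w=n_\cW(v)$, and $n_\cW:\cV\to\cW$ is continuous. This \emph{Helmholtz reduction} is the main device to compensate for the lack of compactness of $\cW\hookrightarrow L^p$.

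\medskip
\noindent\textbf{Part (a).} Introduce the Nehari--Pankov set
\[
\cM:=\{E\in W^p_0(\curl;\Om)\setminus Z : \fJ'(E)E=0 \text{ and } \fJ'(E)z=0 \text{ for all }z\in Z\}.
\]
Combining (F4), (F5) and the reduction $n_\cW$, I would show that for every $e\in Y\setminus\{0\}$ the restriction $(t,z)\mapsto\fJ(te+z)$ on $[0,\infty)\times(\cV^-\oplus\cV_0)$ (after eliminating the $\cW$-component via $n_\cW$) attains a unique maximum, producing a unique $m(e)\in\cM$; the strict version (F5)(ii) is precisely what secures uniqueness when the maximizer meets $\cV_0$. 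Setting $c:=\inf_\cM\fJ$, estimates from (F2)--(F3) give $c>0$, and any minimizing sequence $(E_n)\subset\cM$ is bounded in $W^p_0(\curl;\Om)$ through Remark~\ref{rem:convexity}(b). Writing $E_n=v_n+w_n$, compactness of $\cV\hookrightarrow L^p$ yields $v_n\to v_0$ along a subsequence, and then continuity of $n_\cW$ plus uniqueness of the concave reduction give $w_n\to n_\cW(v_0)$; the limit is the desired ground state. The main obstacle is to verify that the infimum is actually attained at a critical point of $\fJ$ (not merely of $\fJ|_\cM$): this requires building an equivariant deformation on the manifold side that respects $n_\cW$ and showing the Palais--Smale condition at level $c$, which in turn rests on the compactness of $\cV\hookrightarrow L^p$ and the strict convexity in (F6)(ii) to rule out concentration in $\cW$.

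\medskip
\noindent\textbf{Part (b).} Without (F5) the set $\cM$ may fail to be a manifold, so I would instead apply a strongly indefinite linking theorem in the style of Bartsch--Ding (as already used in \cite{BartschMederski1}) with the topology adapted to the splitting $Y\oplus Z$. The reduction $n_\cW$ is still available via (F6), so the problem can be recast on the subspace $\cV$; (F7) then produces a bounded Cerami sequence at the linking level, and compactness of $\cV\hookrightarrow L^p$ yields a nontrivial critical point. If in addition (F8) holds, I would verify that $t\mapsto\fJ(te+z)$ admits a unique positive maximum for every $e\in Y\setminus\{0\}$ and $z\in Z$, so that the Nehari--Pankov infimum is well defined and attained as in (a), giving a ground state.

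\medskip
\noindent\textbf{Part (c).} Assume $F(x,-u)=F(x,u)$, so $\fJ$ is $\Z/2$-invariant. Under (F5), the odd map $e\mapsto m(e)$ constructed in (a) is a homeomorphism from the unit sphere of $Y$ onto $\cM$, transporting the Krasnoselskii genus from $Y$ to $\cM$; since $\dim Y=\infty$, the standard Ljusternik--Schnirelman minimax $c_k:=\inf_{A\in\Sigma_k}\sup_{E\in A}\fJ(E)$ (with $\Sigma_k$ the symmetric sets of genus $\ge k$) yields critical values $c_k\nearrow\infty$ of $\fJ|_\cM$, hence of $\fJ$, producing the required unbounded sequence of solutions. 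Under (F6)--(F7) I would instead invoke an equivariant fountain / symmetric linking theorem for strongly indefinite functionals, again combined with $n_\cW$ to restore compactness. The recurring obstacle in all three parts is the failure of weak-to-weak$^*$ continuity of $\fJ'$ on $W^p_0(\curl;\Om)$: this is what forces the systematic use of the strict convexity (F6) and the Helmholtz reduction $n_\cW$ to pass to limits.
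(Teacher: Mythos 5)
Your overall scheme matches the paper's in broad outline — Helmholtz splitting $W^p_0(\curl;\Om)=\cV\oplus\cW$, compactness of $\cV\hookrightarrow L^p$ as the engine, a convexity reduction in the "bad" directions, and a Nehari--Pankov minimization under (F5). But there are two genuine issues.

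First, where the reduction lives. You only eliminate the $\cW$-component via $n_\cW$ and then propose (in part (b)) to run a Bartsch--Ding type strongly indefinite linking theorem on the residual space $\cV=\cV^+\oplus\cV^-\oplus\cV_0$. The paper instead reduces the \emph{entire} subspace $\tX:=\tcV\oplus\cW$ at once: it defines the constraint $\cM=\{E:\fJ'(E)|_{\tX}=0\}$, shows that the reduction map $m:X^+=\cV^+\to\cM$ is a homeomorphism and that $\fJ\circ m$ is $\cC^1$ on $X^+$ (a nontrivial argument based on (B4) and the mean value theorem; see the proof of Theorem~\ref{ThLink2}), and then applies the \emph{classical} mountain pass theorem to $\fJ\circ m$ on $X^+$. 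This converts the strongly indefinite problem into an unconstrained definite one, precisely avoiding the linking theorems that the introduction of the paper explicitly flags as inapplicable here because of the lack of compactness of the full functional derivative. Your plan of reducing only $\cW$ and then invoking a linking theorem with a smooth reduced functional on $\cV$ would require you to prove (i) that $n_\cW$ is smooth enough for the reduced functional to be $\cC^1$ and (ii) that the reduced functional satisfies the relevant compactness condition on the \emph{whole} of $\cV$; you supply neither, and the second is not obviously easier than the original problem.

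Second, and more seriously: the Palais--Smale-type compactness is the technical heart of the proof and you do not establish it. You write that "the main obstacle is \dots showing the Palais--Smale condition at level $c$" — this is exactly Lemma~\ref{lemmaconv2} of the paper, which under (F5) adapts the argument of \cite{BartschMederski1} and under (F6)--(F7) carries out a delicate estimate using $\big(\gamma/2-1\big)$-type coefficients, \eqref{eqest3}, the projection onto $\cW$ in $L^p$, the H\"older inequality, and the boundedness of $\|v_n\|_\cV$, to show $(PS)_c^\cT$. Acknowledging the obstacle does not discharge it. Similarly, your claim in part (a) that boundedness of a minimizing sequence on $\cN$ follows from Remark~\ref{rem:convexity}(b) alone is too optimistic: the paper needs coercivity of $\fJ$ on $\cN$ (hypothesis of Theorem~\ref{ThLink1}), which rests on (F4) and Lemma~\ref{LinkingLemma}, not just on $\langle f_0,u\rangle\ge2F_0$. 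And the step "$v_n\to v_0$ in $L^p$ hence $w_n\to n_\cW(v_0)$" needs a continuity statement for $n_\cW$ in the appropriate (mixed norm/weak) topology — this is the content of conditions (A2)--(A3), verified in Lemma~\ref{LinkingLemma}~a)--b) via (F6) and a Vitali-type argument. As written, your proof sketches the geometry correctly but leaves the compactness machinery — the actual proof — unproved.
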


If (F5) holds then the ground state solution can be characterized as the minimizer of $J$ on the Nehari-Pankov manifold $\cN$ which has infinite dimension and infinite co-dimension. If (F5) does not hold but (F6)-(F7) do, then we first prove the existence of a nontrivial solution by a mountain pass argument on a constraint $\cM\subset W^p_0(\curl;\Om)$. Afterwards we show that $\inf\{J(E): J(E)>0,\, J'(E)=0\}$ is achieved provided that (F8) is additionally satisfied. If (F5)-(F7) hold then $\cN$ is a submanifold of $\cM$ with co-dimension 1, and the mountain pass argument on $\cM$ gives the minimum of $J$ on $\cN$. The manifolds $\cM$ and $\cN$ will be defined in Section~\ref{sec:CriticalTheory} in an abstract setting, and in Section~\ref{sec:proof-main} for the functional $J$. Note that we can deal with a much wider range of nonlinearities than those considered in \cite{BartschMederski1}.

\begin{Rem}
If $E=v+w\in W^p_0(\curl;\Om)$ is a nontrivial solution of \eqref{eq:main} with $v\in\cV$ and $w\in\cW$ then necessarily $v\ne0$. This is a simple consequence of (L1) and (F6)(i). In fact, testing \eqref{eq:main} with $v+w$ yields: $\curlop(\mu(x)^{-1}\curlop v) \ne V(x)v$; see Proposition~\ref{prop:0isolated}.
\end{Rem}

In the next remark we give examples of nonlinearities satisfying our conditions.

\begin{Rem}\label{RemarkEx}
If $\tilde F:[0,+\infty)\to\R$ satisfies the classical Ambrosetti-Rabinowitz condition, then
\begin{equation}\label{Ex3}
F_0(x,u) = \tilde F(|\Ga(x)u|)
\end{equation}
satisfies (F7). Using this one can easily construct many examples of nonlinearities satisfying (F0)-(F4), (F6)-(F7). Observe that (F1)-(F8) are positively linear conditions, i.e.~if $F_0,G_0$ satisfy these conditions then so does $\al F_0+\be G_0$ for any $\al,\be>0$. This is not the case for condition (*) in Remark~\ref{rem:convexity}~d) which is quadratic in $F_0,f_0$. Therefore it is easy to see that
\begin{equation}\label{Ex1}
F_0(x,u)=\sum_{i=1}^m\frac{1}{p_i}|\Ga_i(x)u|^{p_i}
\end{equation}
satisfies (F1)-(F8), provided $2<\ga=p_1\le p_2\le\dots\le p_m=p=\eta<6$, $\Ga_i(x)\in GL(3)$ for a.e.~$x\in\Om$, and $\Ga_i,\Ga_i^{-1}\in L^\infty(\Om,\R^{3\times3})$. Observe that these functions are not radial when $\Ga_i(x)$ is not an orthogonal matrix. In particular, if $p_i=4$ then \eqref{Ex1} models the Kerr-effect. 
Nonlinearities of the form \eqref{Ex1} have not been dealt with in [6] because it was unclear whether they satisfy the hypothesis (*) from Remark~\ref{rem:convexity}~d). Given the other conditions from Theorem 2.2, it has been observed in \cite[Remark 5.4 (d)]{BartschMederskiSurvey} that a weaker variant of (*) is essentially equivalent to (F9) from \cite{BartschMederskiSurvey}, which is a stronger variant of (F5).
%
\end{Rem}

Now we concentrate on nonlinear uniaxial media which are of great importance due to the phenomenon of birefringence and applications in crystallography \cite{FundPhotonics,StuartZhou03,Nie}. Here we require that the problem is symmetric with respect to the cylindrical symmetry group $G=\O(2)\times\{1\}\subset O(3)$:
\begin{itemize}
\item[(S)] $\Om$ is invariant with respect to $G$, and $F_0$ is invariant with respect to the action of $G$ on the $x$- and $u$-variables, i.e.\ $F_0(g_1x,g_2u)=F_0(x,u)$ for all $x\in\Om$, $u\in\R^3$, $g_1,g_2\in G$. Moreover, $\mu(x)$ and $V(x)$ commute with $G$, and $\mu,V$ are invariant with respect to $G$, i.e.\ $g_2\mu(g_1x)g_2^{-1}=\mu(x)$ for all $x\in\Om$, $g_1,g_2\in G$; similarly for $V$.
\end{itemize}
Observe that a symmetric matrix $A$ commutes with $G$ if and only if it is of the form
\begin{equation}\label{eq:formOfeps1}
A=\begin{pmatrix}a & 0 & 0\\0 & a & 0\\0 & 0 & b\end{pmatrix},
\end{equation}
Thus we require that the permeability tensor $\mu$ and the tensor $V$, which corresponds to the permittivity tensor $\eps$, have the form \eqref{eq:formOfeps1} with $a,b\in L^{\infty}(\Om)$ positive, bounded away from $0$, and invariant with respect to the action of $G$ on $\Om$. Hence we allow cylindrically symmetric anisotropic materials. In this setting more can be said about the shape of the solutions. In fact, we can show the existence of solutions of the form
\begin{equation}\label{eq:sym2}
E(x)=\al(r,x_3)\begin{pmatrix}-x_2\\x_1\\0\end{pmatrix},\qquad r=\sqrt{x_1^2+x_2^2},
\end{equation}
and of the form
\begin{equation}\label{eq:sym1}
E(x)=\be(r,x_3)\begin{pmatrix}x_1\\x_2\\0\end{pmatrix}
      +\ga(r,x_3)\begin{pmatrix}0\\0\\1\end{pmatrix}.
\end{equation}

\begin{Th}\label{thm:sym1}
Suppose (L1), (F0)-(F4), and (S) hold.
\begin{itemize}
\item[a)] If $F_0$ is even in $u$ and (F5) or (F7) hold then there exist infinitely many solutions of the form \eqref{eq:sym2} and with positive energy. Moreover there exists a least energy solution among all solutions with positive energy of the form \eqref{eq:sym2} provided (F5) or (F7)-(F8) hold. Every solution of the form \eqref{eq:sym2} is divergence-free and lies in $H^1_0(\Om,\R^3)$.
\item[b)] If (L2) holds and in addition (F5) or (F6)-(F7), then \eqref{eq:main} has a solution $E\in W^p_0(\curl;\Om)$ of the form \eqref{eq:sym1}. Moreover, there exists a least energy solution among all solutions of the form \eqref{eq:sym1} and with positive energy provided (F5) or (F6)-(F8) hold. If $F_0$ is even in $u$, in addition to (L2), (F5) or (F6)-(F7), then \eqref{eq:main} has infinitely many solutions of the form \eqref{eq:sym1} having positive energy.
\end{itemize}
\end{Th}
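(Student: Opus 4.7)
The strategy rests on Palais' principle of symmetric criticality applied to two closed subspaces $X_a,X_b\subset W^p_0(\curl;\Om)$ corresponding to the forms \eqref{eq:sym2} and \eqref{eq:sym1}, respectively. Under (S) the functional $\fJ$ is invariant under the natural action of $G=\O(2)\times\{1\}$ on $W^p_0(\curl;\Om)$ given by $(g\cdot E)(x)=gE(g^{-1}x)$, and the space $X_b$ of fields of the form \eqref{eq:sym1} is precisely the $G$-fixed subspace. For $X_a$ one uses instead the twisted action $(g\cdot_\chi E)(x):=\chi(g)\,gE(g^{-1}x)$, where $\chi:G\to\{\pm1\}$ is $+1$ on $SO(2)\times\{1\}$ and $-1$ on orientation-reversing elements; using the pseudovector identity $\curlop(gE(g^{-1}\,\cdot\,))=(\det g)\,g(\curlop E)(g^{-1}\,\cdot\,)$ one checks that $\fJ$ is $\cdot_\chi$-invariant provided $F_0$ is even in $u$, and that $X_a$ is the corresponding fixed subspace. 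In both cases every critical point of the restriction of $\fJ$ is a genuine critical point on $W^p_0(\curl;\Om)$; note that the evenness assumption in part (a) is essential already for setting up the Palais framework.

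For part (a), I would first verify that every $E\in X_a$ is divergence-free (immediate from $\alpha(r,x_3)$ depending only on $(r,x_3)$) and satisfies $\nu\times E=0$. Passing to cylindrical coordinates and parametrizing $E$ by the scalar $\alpha$ on the meridian domain, one shows that the $H(\curl;\Om)$-graph norm of $E$ is equivalent to a weighted Sobolev norm of $\alpha$, which in turn is equivalent to the full $H^1$-seminorm of $E$. Consequently $X_a\hookrightarrow H^1_0(\Om,\R^3)$ continuously and, by Rellich--Kondrachov, $X_a\hookrightarrow L^p(\Om,\R^3)$ compactly for $2<p<6$; assumption (L2) is therefore not needed. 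On $X_a$ the functional $\fJ$ is of standard semilinear $H^1_0$-type with indefinite but Fredholm quadratic part (whose kernel $\cV_0\cap X_a$ is finite dimensional by compactness of the resolvent) and compact subcritical nonlinearity, so the machinery of Section~\ref{sec:CriticalTheory} applies directly: under (F5) one minimizes $\fJ$ on $\cN\cap X_a$; under (F7) a linking/mountain-pass argument on $\cM\cap X_a$ yields a nontrivial solution, which (F8) promotes to a least-energy solution among positive-energy critical points; finally, the $\Z/2$-equivariance coming from evenness of $F_0$ together with a symmetric minimax produces the unbounded sequence of solutions.

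For part (b), the key observations are that (i) $\curlop(\mu(x)^{-1}\curlop)$, multiplication by $V$, and $u\mapsto f_0(x,u)$ all preserve $X_b$, thanks to the block form \eqref{eq:formOfeps1} of $\mu,V$ and the invariance assumption (S); and (ii) the Helmholtz decomposition $W^p_0(\curl;\Om)=\cV\oplus\cW$ restricts to $X_b=(X_b\cap\cV)\oplus(X_b\cap\cW)$. Under (L2), $X_b\cap\cV$ is, as a closed subspace of $\cV$, compactly embedded in $L^p(\Om,\R^3)$. With these ingredients the proof of Theorem~\ref{thm:main} transcribes essentially verbatim to the restricted problem on $X_b$: the Nehari--Pankov manifold in $X_b$ yields a ground state under (F5); the mountain-pass-on-$\cM$ construction gives a nontrivial solution under (F6)--(F7), which (F8) promotes to a least-energy solution among positive-energy critical points; a $\Z/2$-equivariant version of the same minimax gives the infinite sequence when $F_0$ is even.

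The main technical obstacles are (i) in part (a), establishing the continuous embedding $X_a\hookrightarrow H^1_0(\Om,\R^3)$, which relies crucially on combining the divergence-free structure with the axisymmetric ansatz in order to bypass the well-known failure of $H(\curl;\Om)\cap H(\div;\Om)\cap\ker\gamma_t\subset H^1(\Om,\R^3)$ on a general Lipschitz domain; and (ii) in part (b), verifying that none of the structural ingredients of the abstract theory of Section~\ref{sec:CriticalTheory}---the linking/mountain-pass geometry, the Cerami condition for the reduced Nehari functional, and the strict convexity controlling the $\cW$-component---are lost upon restriction to $X_b$. Both points ultimately reduce to careful bookkeeping with the isotypical decomposition induced by (S) together with the compactness provided by (L2).
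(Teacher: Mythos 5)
Your proposal is essentially the paper's proof, repackaged slightly. The paper first reduces to the $SO(2)\times\{1\}$-fixed subspace $X^G$, uses the decomposition $E=E_\tau+E_\rho+E_\zeta$ from Lemma~\ref{LemDecomposition}, and then applies Palais a second time with the $\Z/2$-involutions $S_1$, $S_2$ to land on $\{E_\tau\}$ or $\{E_\rho+E_\zeta\}$; you collapse this into a single application of Palais with the full $O(2)\times\{1\}$-action (natural for $X_b$, twisted by the sign character for $X_a$), whose fixed subspaces are exactly the same. One small point in your favour: you correctly state the pseudovector law $\curlop\bigl(gE(g^{-1}\cdot)\bigr)=(\det g)\,g(\curlop E)(g^{-1}\cdot)$, whereas the paper's Proposition~\ref{prop:sym} asserts $\curlop(g*E)=g*(\curlop E)$ without the $\det g$ factor, which is only correct for $g\in SO(2)\times\{1\}$; this does not affect any of the invariance conclusions there because $(\det g)^2=1$ and the factor also drops out when testing against irrotational $\varphi$, but it is worth being aware of. The remaining steps in both parts (the $H^1_0$-embedding of $X_a$ making (L2) superfluous, the restricted Helmholtz decomposition in $X_b$, and the transfer of the Section~\ref{sec:CriticalTheory} machinery) agree with the paper.
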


If (F5) holds then the least energy solutions in Theorem~\ref{thm:sym1} can be obtained by minimization on the Nehari-Pankov manifold in the space of fields of the form \eqref{eq:sym2} or \eqref{eq:sym1}, respectively. Observe that in Theorem~\ref{thm:sym1}~a) we do not assume (L2) nor (F6) since we will be able to restrict our functional to fields of the form \eqref{eq:sym2} which are divergence free and continuously embedded in $H^1_0(\Om,\R^3)$; see Lemma~\ref{LemDecomposition}. This restriction requires the additional symmetry that $F$ is even in $u$. Without this condition we do not know whether a single solution of the form \eqref{eq:sym2} exists.

Even in the isotropic case $\mu=\mu_0\id_{3\times 3}$, $V(x)=\lambda\id_{3\times 3}$, theorems \ref{thm:main} and \ref{thm:sym1} extend results from
\cite[Theorem 2.2 and Theorem 2.3]{BartschMederski1}. The solutions of the form \eqref{eq:sym1} have not been considered in \cite{BartschMederski1}. For $\Om=\R^3$ solutions of the form \eqref{eq:sym2} have been treated in \cite{BenForAzzAprile}, solutions of the form \eqref{eq:sym1} in \cite{DAprileSiciliano}.

\section{Preliminaries}\label{sec:prelim}

As a consequence of (L1) the inner product
\[
(E_1,E_2) = \int_{\Om}\langle \mu(x)^{-1}\curlop E_1,\curlop E_2\rangle
             + \langle V(x)E_1, E_2\rangle\,dx
\]
in $H_0(\curl,\Om)$ is equivalent to the standard inner product in $H(\curl;\Om)$. For $v\in\cV$ and $w\in\cW$ there holds:
\begin{equation}\label{eq:VperpW}
(v,w) = \int_{\Om}\langle V(x)v,w\rangle \,dx = 0
\end{equation}
so $\cV$ and $\cW$ are orthogonal with respect to $(\cdot,\cdot)$. Clearly, $\cW$ contains all gradient vector fields: $\nabla W^{1,p}(\Om)\subset\cW$, hence
\[
\begin{aligned}
\cV &\subset \{E\in W^p_0(\curl;\Om):\div(V(x)E)=0\}\\
&\subset\left\{E\in H_0(\curl;\Om): \div(V(x)E)\in L^2(\Om,\R^3)\right\} =: X_N(\Om,V).
\end{aligned}
\]
Therefore assumption (L2) holds in particular if $X_N(\Om,V)$ embeds into $H^1(\Om,\R^3)$. This has been proved in \cite[Theorem~2.12]{Amrouche} for $V=\id_{3\times 3}$ and $\partial \Om$ of class $\cC^{1,1}$. Costabel et al.\ \cite{CostabelDN1999} and Hiptmair \cite[Section 4]{Hiptmair} obtained the embedding for Lipschitz domains admitting singularities and for isotropic and piecewise constant $V$. The following proposition contains another setting when (L2) holds.

\begin{Prop}\label{Prop:embedding} Suppose (L1) holds, $V$ is Lipschitz continuous, and $\Om$ has $\cC^2$ boundary. Then $X_N(\Om,V)$ is continuously embedded in $H^1(\Om,\R^3)$. In particular (L2) holds.
\end{Prop}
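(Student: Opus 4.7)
Plan. My approach is a Helmholtz-type splitting $E=\nabla\varphi+F$, where $\varphi$ absorbs the weighted divergence and $F$ is treated by elliptic regularity for an overdetermined first-order system. This reduces the anisotropic case to standard elliptic regularity results, and in the end the embedding (L2) will drop out from Rellich-Kondrachov.

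\textit{Paragraph 1 (splitting).} For $E\in X_N(\Om,V)$, Lax-Milgram together with (L1) provides a unique $\varphi\in H^1_0(\Om)$ solving
\[
-\div(V\nabla\varphi)=-\div(VE)\text{ in }\Om,\qquad\varphi|_{\pa\Om}=0.
\]
Since $V\in W^{1,\infty}$ and $\pa\Om$ is of class $\cC^2$, the standard $H^2$-regularity theorem for divergence-form operators with Lipschitz coefficients gives $\varphi\in H^2(\Om)$ with $\|\varphi\|_{H^2}\le C\|\div(VE)\|_{L^2}$, hence $\nabla\varphi\in H^1(\Om,\R^3)$. Set $F:=E-\nabla\varphi$. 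Because $\varphi|_{\pa\Om}=0$ forces $\nu\times\nabla\varphi=0$ on $\pa\Om$ (tangential derivatives of a trace-zero $H^2$-function vanish), we have $F\in H_0(\curl;\Om)$. Moreover $\curl F=\curl E\in L^2$ and $\div(VF)=0$ by construction. It thus remains to show $F\in H^1(\Om,\R^3)$.

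\textit{Paragraph 2 (regularity of $F$).} The field $F$ solves the overdetermined first-order system
\[
\curl F=\curl E,\qquad\div(VF)=0\text{ in }\Om,\qquad\nu\times F=0\text{ on }\pa\Om.
\]
Its principal symbol $\sigma(\xi)\colon\R^3\to\R^4$, $\hat F\mapsto(\xi\times\hat F,\,\xi\cdot V(x)\hat F)$, is injective for every $\xi\ne 0$: if $\xi\times\hat F=0$ then $\hat F=t\xi$, and the scalar equation forces $t(V(x)\xi\cdot\xi)=0$, hence $t=0$ by the uniform positive definiteness in (L1). Therefore the system is elliptic in the Douglis-Nirenberg sense, and the tangential boundary operator $\nu\times\cdot$ is known to satisfy the Lopatinskii-Shapiro condition for the Maxwell system. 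Agmon-Douglis-Nirenberg regularity for overdetermined elliptic systems with Lipschitz coefficients and $\cC^2$ boundary then yields
\[
\|F\|_{H^1(\Om)}\le C\bigl(\|\curl E\|_{L^2}+\|F\|_{L^2}\bigr),
\]
whence $F\in H^1$.

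\textit{Paragraph 3 (conclusion and main obstacle).} Combining the two steps, $E=\nabla\varphi+F\in H^1(\Om,\R^3)$ with a continuous norm estimate in terms of $\|\curl E\|_{L^2}$, $\|\div(VE)\|_{L^2}$ and $\|E\|_{L^2}$, proving the continuous embedding $X_N(\Om,V)\hookrightarrow H^1(\Om,\R^3)$. The compactness asserted in (L2) then follows from $\cV\subset X_N(\Om,V)\hookrightarrow H^1(\Om,\R^3)$ together with the compact Rellich-Kondrachov embedding of $H^1(\Om)$ into $L^p(\Om)$ for any $2<p<6$. The main difficulty is Paragraph 2: invoking ADN theory cleanly for the weighted, overdetermined system. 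A self-contained alternative is to approximate $F$ by smooth fields $F_n$ with $\nu\times F_n=0$ and derive the Friedrichs-type inequality $\|F_n\|_{H^1}^2\le C\bigl(\|\curl F_n\|_{L^2}^2+\|\div(VF_n)\|_{L^2}^2+\|F_n\|_{L^2}^2\bigr)$ by integration by parts, controlling the boundary terms through a trace inequality and the bounded second fundamental form of $\pa\Om\in\cC^2$, and then pass to the limit.
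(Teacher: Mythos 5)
Your proof uses a weighted Helmholtz splitting $E=F+\nabla\varphi$ with $\div(VF)=0$, which has the drawback that the field $F$ still belongs to $X_N(\Om,V)$ with the full anisotropic weight: you have not reduced to a known case, you have only normalized the inhomogeneous divergence to zero. All the difficulty is therefore concentrated in Paragraph~2, and there the argument is not closed. Classical Agmon--Douglis--Nirenberg theory for overdetermined first-order systems with boundary conditions is formulated for coefficients of higher regularity than Lipschitz, and applying it to a weighted system with merely $V\in W^{1,\infty}$ requires a freezing/perturbation argument and a careful verification of the complementing (Shapiro--Lopatinskii) condition for the pair $\bigl(\curl,\,\div(V\cdot)\bigr)$ together with $\nu\times\cdot$; none of this is carried out, and you yourself flag it as ``the main obstacle.'' The Friedrichs-type inequality you sketch as an alternative runs into the same issue: the lower-order commutator terms coming from $\nabla V$ and the boundary terms involving the second fundamental form need to be genuinely estimated, not asserted.

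The paper's proof sidesteps this entirely by choosing the \emph{unweighted} Helmholtz decomposition $E=u+\nabla w$ with $\div u=0$ (ordinary divergence), $u\in H_0(\curl;\Om)$, $w\in H^1_0(\Om)$. The divergence-free part $u$ then lies in $X_N(\Om,\id_{3\times3})$, and the embedding $X_N(\Om,\id_{3\times3})\hookrightarrow H^1(\Om,\R^3)$ for $\cC^{1,1}$ (a fortiori $\cC^2$) boundaries is a standard cited result (Amrouche et al.), so $u\in H^1$ comes for free. The gradient part then solves $\div(V\nabla w)=\div(f)$ with $f=VE-Vu$; since $u\in H^1$ and $V\in W^{1,\infty}$ one has $\div(Vu)\in L^2$, and $\div(VE)\in L^2$ by hypothesis, so the right-hand side is in $L^2$ and Gilbarg--Trudinger Theorem~8.12 gives $w\in H^2$, hence $\nabla w\in H^1$ and $E\in H^1$. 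So the paper's route trades the weighted overdetermined system (hard, needs ADN) for the identity-weight embedding (known) plus scalar $H^2$ regularity (textbook). If you want to keep your splitting, you would have to actually prove the weighted first-order estimate; as written, Paragraph~2 is a genuine gap rather than a citation.
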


\begin{proof}
Any $E \in H_0(\curl;\Om)$ has a standard Helmholtz decomposition $E = u+\nabla w$ with
$u\in\{E\in H_0(\curl;\Om): \div (E)=0\}$ and $w\in H^1_0(\Omega)$. Since $X_N(\Om,\id_{3\times 3})$ is embedded in $H^1(\Om,\R^3)$ there holds $u\in H^1(\Om,\R^3)$. Observe that $w$ solves the divergence form elliptic equation
\begin{equation*}
\div(V(x)\nabla w) = \div(f),\quad w\in H^1_0(\Om),
\end{equation*}
with $f = V(x)E-V(x)u$. As a consequence of $u\in H^1(\Om,\R^3)$ and $V \in W^{1,\infty}(\Om,\R^3)$ we obtain $\div(V(x)u) \in L^2(\Om)$, hence $\div(f) \in L^2(\Om)$. The operator
$L := \div(V(x)\nabla(\cdot))$ is strictly elliptic and therefore $w\in H^2(\Om)$ by
\cite[Theorem 8.12]{GilbargTrudinger}. This implies $E = u+\nabla w \in H^1(\Om,\R^3)$.
\end{proof}

Note that $\cV$ is a Hilbert space with the scalar product
\begin{equation}\label{eq:scal-prod-on-cV}
\langle u,v \rangle_\cV := \int_{\Om}\langle \mu(x)^{-1}\curlop u,\curlop v\rangle\; dx.
\end{equation}
If $\Omega$ is simply connected with connected boundary, then the {\em normal cohomology space}
$$
K_N(\Om)=\{E\in H_0(\curl;\Om):\; \curlop E=0,\;\div(E)=0\}
$$
is trivial and $\W=\nabla W^{1,p}_0(\Omega)$. This is the case considered in  \cite{BartschMederski1}. The spectrum of the curl-curl operator in $H_0(\curl;\Om)$ consists of the eigenvalue $0$ with infinite multiplicity and eigenspace $\nabla H^1_0(\Om)$, and of a sequence of positive eigenvalues with finite multiplicities and eigenfunctions in
$\{v\in H_0(\curl;\Om):\div(v)=0\}$; see \cite[Corollary 3.51, Theorem~4.18]{Monk}. For a general domain $K_N(\Om)$ is nontrivial and contained in $\W$. We set
\[
\cW_2:=\{w\in H_0(\curl;\Om): \curlop w=0\}.
\]
In the anisotropic situation we investigate the following {\em curl-curl source problem} instead of the spectrum of the curl-curl operator.

\begin{Prop} Suppose (L1) and (L2) hold. Then for any $g\in L^2(\Om,\R^3)$ the equation
\begin{equation}\label{eq:operator}
\curlop(\mu(x)^{-1}\curlop v) + V(x)w = V(x)g
\end{equation}
has a unique solution $(v,w) \in \cV\times\W_2$ and the operator
\[
K:L^2(\Om,\R^3) \to \cV \subset L^2(\Om,\R^3),\quad
 Kg=v\text{ satisfies \eqref{eq:operator} for some }w\in \W_2,
\]
is compact. The restriction $K_\cV:\cV\to\cV$ of $K$ is compact and self-adjoint with respect to the scalar product \eqref{eq:scal-prod-on-cV}.
\end{Prop}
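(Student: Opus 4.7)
My plan is to decouple equation \eqref{eq:operator} into a variational problem for $v\in\cV$ and a projection problem for $w\in\cW_2$, exploiting the $V$-orthogonality between $\cV$ and $\cW_2$, and then to recover \eqref{eq:operator} in the distributional sense by Helmholtz-decomposing test functions $\varphi\in\cC^\infty_0(\Om,\R^3)$.

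As a first step I would establish the Poincar\'e-type inequality $|v|_p\le C|\curlop v|_2$ on $\cV$ by contradiction: if $v_n\in\cV$ satisfies $|v_n|_p=1$ and $|\curlop v_n|_2\to 0$, the sequence is bounded in $W^p_0(\curl;\Om)$, so (L2) yields a subsequence converging in $L^p$ and weakly in $W^p_0(\curl;\Om)$ to some $v\in\cV$ with $|v|_p=1$ and $\curlop v=0$, contradicting $\cV\cap\cW=\{0\}$. Together with $L^p\hookrightarrow L^2$ this gives $|v|_2\le C'|\curlop v|_2$, so $\langle\cdot,\cdot\rangle_\cV$ is a coercive continuous symmetric form on the Hilbert space $\cV$ (the norms $\|\cdot\|_{W^p_0(\curl;\Om)}$ and $\langle\cdot,\cdot\rangle_\cV^{1/2}$ are equivalent on $\cV$), and $\tilde v\mapsto\int_\Om\langle Vg,\tilde v\rangle\,dx$ is continuous on $\cV$.

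I would then define $w\in\cW_2$ as the unique $V$-weighted $L^2$-projection of $g$ onto $\cW_2$, which is closed in $L^2$ and carries the equivalent inner product $\int_\Om\langle V\cdot,\cdot\rangle\,dx$ by (L1); and $v\in\cV$ by Lax-Milgram applied to $\langle\cdot,\cdot\rangle_\cV$ against the above functional, giving in particular $\|v\|_\cV\le C|g|_2$. To verify that $(v,w)$ satisfies \eqref{eq:operator} distributionally I test against $\varphi\in\cC^\infty_0(\Om,\R^3)\subset W^p_0(\curl;\Om)$, decompose $\varphi=\varphi_\cV+\varphi_\cW$ using $W^p_0(\curl;\Om)=\cV\oplus\cW$, and use $\curlop\varphi_\cW=0$ together with the defining identities for $v$ and $w$ (noting $\cW\subset\cW_2$). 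The identity reduces to the extended orthogonality
\[
\int_\Om\langle Vv,w\rangle\,dx=0\qquad\text{for all }v\in\cV,\ w\in\cW_2,
\]
which extends \eqref{eq:VperpW}. I would obtain this by $L^2$-density of $\cW$ in $\cW_2$: since $\cW_2=\nabla H^1_0(\Om)\oplus K_N(\Om)$, $\nabla\cC^\infty_0(\Om)\subset\cW$ is dense in $\nabla H^1_0(\Om)$, and $K_N(\Om)\subset\cW$ because $K_N(\Om)$ is finite-dimensional and consists of smooth fields. Uniqueness then follows with $g=0$ by testing against smooth approximations of $v$ and using this orthogonality to get $\int\langle\mu^{-1}\curlop v,\curlop v\rangle\,dx=0$, hence $v\in\cV\cap\cW=\{0\}$, and then $Vw=0$ forces $w=0$ by (L1).

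Finally, compactness of $K:L^2(\Om,\R^3)\to\cV$ follows from a weak-to-strong argument: if $g_n\rightharpoonup g$ weakly in $L^2$, the variational identity yields $Kg_n\rightharpoonup Kg$ weakly in $\cV$; by (L2) and $L^p\hookrightarrow L^2$ one has $|K(g_n-g)|_2\to 0$, and then
\[
\|K(g_n-g)\|_\cV^2=\int_\Om\langle V(g_n-g),K(g_n-g)\rangle\,dx\le|V|_\infty|g_n-g|_2|K(g_n-g)|_2\to 0.
\]
Self-adjointness of $K_\cV$ is immediate: setting $\tilde v=g_2$ in the defining identity for $K_\cV g_1$ and $\tilde v=g_1$ in that for $K_\cV g_2$, symmetry of $V$ yields $\langle K_\cV g_1,g_2\rangle_\cV=\int_\Om\langle Vg_1,g_2\rangle\,dx=\langle g_1,K_\cV g_2\rangle_\cV$. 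The main obstacle is the extended orthogonality, since on domains of nontrivial topology $\cW_2$ is strictly larger than $\cW$ and the density in $L^2$ must be argued via the Hodge-type decomposition of $\cW_2$; once this is secured, the rest is a routine combination of Lax-Milgram, Riesz projection, and the compact embedding provided by (L2).
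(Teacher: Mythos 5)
Your approach takes a genuinely different and more explicit route than the paper, which simply invokes the Babuska--Brezzi theorem for the mixed (saddle-point) formulation. You decouple the two unknowns: $w$ is the $V$-weighted $L^2$-projection of $g$ onto $\cW_2$, and $v$ is obtained by Lax--Milgram on $\cV$ with the coercive form $\langle\cdot,\cdot\rangle_\cV$; the distributional equation is then reassembled by splitting a test field $\varphi=\varphi_\cV+\varphi_\cW$. Brezzi's theorem packages the same algebra (ellipticity of $a(v,v')=\int\langle\mu^{-1}\curlop v,\curlop v'\rangle$ on the constraint kernel plus an inf-sup condition for $b(v',w)=\int\langle Vw,v'\rangle$) without exhibiting the orthogonal splitting; your route makes the structure transparent. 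Both arguments hinge on the orthogonality $\int_\Om\langle Vv,w\rangle\,dx=0$ for $v\in\cV$, $w\in\cW_2$, which you rightly single out. The Poincar\'e inequality on $\cV$ via (L2), the verification of the equation, the compactness of $K$, and the self-adjointness computation are all correct.

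The one step that is underjustified is the claim $K_N(\Om)\subset\cW$ ``because $K_N(\Om)$ is finite-dimensional and consists of smooth fields.'' Interior smoothness and finite dimension do not by themselves put a field into $W^p_0(\curl;\Om)$: one needs $L^p$-integrability of $K_N(\Om)$ up to the (merely Lipschitz) boundary, which is a regularity statement, and then approximability by $\cC^\infty_0(\Om,\R^3)$ in the $W^p(\curl)$-graph norm, which requires a localization-and-mollification argument rather than just smoothness. The paper does assert $K_N(\Om)\subset\cW$ in Section~\ref{sec:prelim}, so you may quote that; alternatively, you can avoid $K_N(\Om)\subset\cW$ entirely by proving the extended orthogonality $\cV\perp_V\cW_2$ directly from $L^2$-density of $\{\varphi\in\cC^\infty_0(\Om,\R^3):\curlop\varphi=0\}$ in $\cW_2$, using $\cW_2=\nabla H^1_0(\Om)\oplus K_N(\Om)$ and approximating each harmonic Dirichlet field by gradients of smooth cut-off potentials supported in $\Om$. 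Once this point is made precise, the proposal is a valid alternative to the Brezzi argument.
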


\begin{proof}
The existence and uniqueness of the solution follow from the Babuska-Brezzi theorem; see e.g.\  \cite[Theorem 2.1.4]{Doerfler}. The compactness of $K$, and of $K_\cV$, is a consequence of the compactness of the embedding $\V\hookrightarrow L^2(\Om,\R^3)$. The self-adjointness of $K_\cV$ follows from $\langle Kg,h\rangle_\cV = \int_\Om\langle V(x)g(x),h(x)\rangle_{\R^3}\,dx$ for $g,h\in\cV$.
\end{proof}

\begin{Cor}\label{Cor:Spectrum}
There is a discrete sequence $0<\la_1<\la_2<\la_3<\ldots$ of (anisotropic) Maxwell eigenvalues with eigenspaces of finite multiplicity, i.e.\
$$
\curlop(\mu(x)^{-1}\curlop v) = \la V(x)v
$$
has a solution $v\in\V$ if and only if $\la=\la_k$ for some $k\geq 1$, and the space of solutions is finite-dimensional.
\end{Cor}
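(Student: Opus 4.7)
The plan is to reduce the claim to the spectral theorem for the compact self-adjoint operator $K_\cV:\cV\to\cV$ provided by the previous proposition. The reduction rests on a bijection between the Maxwell eigenvalues $\lambda$ and the nonzero eigenvalues of $K_\cV$. Concretely, if $v\in\cV\setminus\{0\}$ solves $\curlop(\mu(x)^{-1}\curlop v)=\lambda V(x)v$, then $(v,0)\in\cV\times\cW_2$ solves \eqref{eq:operator} with $g=\lambda v$; by the uniqueness part of the previous proposition $K_\cV(\lambda v)=v$, i.e.\ $K_\cV v=\lambda^{-1}v$. Conversely, if $K_\cV v=\kappa v$ with $\kappa\ne 0$, then unpacking the definition of $K_\cV$, testing against arbitrary $\psi\in\cV$, and using the symmetry of $V$ together with the defining property of $\cV$ to eliminate the auxiliary $w\in\cW_2$, one recovers the Maxwell eigenvalue equation with $\lambda=\kappa^{-1}$. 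Hence the Maxwell eigenvalues are precisely the reciprocals of the nonzero eigenvalues of $K_\cV$, with identical eigenspaces.

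Next I would apply the Hilbert-space spectral theorem for compact self-adjoint operators to $K_\cV$ on $(\cV,\langle\cdot,\cdot\rangle_\cV)$. This yields a (finite or countable) sequence of distinct nonzero real eigenvalues accumulating only at $0$, each with finite-dimensional eigenspace. Translating back, the Maxwell eigenvalues form a discrete set with finite multiplicities, and the inverses of the $K_\cV$-eigenvalues tending to $0$ force the Maxwell eigenvalues to tend to $+\infty$ (once positivity is established).

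The final step is to show positivity: $\lambda>0$ for every eigenvalue. Testing the eigenvalue equation against $v$ (the integration by parts being justified by $v\in\cV\subset H_0(\curl;\Om)$) yields
\[
\int_\Om\langle\mu(x)^{-1}\curlop v,\curlop v\rangle\,dx
 =\lambda\int_\Om\langle V(x)v,v\rangle\,dx.
\]
By (L1) the right-hand side is strictly positive, while the left-hand side is strictly positive unless $\curlop v=0$; but an element of $\cV$ with vanishing curl lies in $\cV\cap\cW=\{0\}$ via the direct-sum Helmholtz decomposition $W^p_0(\curl;\Om)=\cV\oplus\cW$, contradicting $v\ne 0$. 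Thus $\lambda>0$, and the eigenvalues can be arranged as $0<\lambda_1<\lambda_2<\cdots$ with $\lambda_k\to\infty$. The only delicate point is the bookkeeping in the converse direction of the bijection; otherwise the argument is a direct application of standard compact-operator spectral theory.
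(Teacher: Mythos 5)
Your argument reproduces the paper's approach: reduce to the spectral theorem for the compact self-adjoint operator $K_\cV$ via the correspondence $\la\leftrightarrow\la^{-1}$ between Maxwell eigenvalues and nonzero eigenvalues of $K_\cV$, and derive positivity by testing against $v$ and using that $\curlop v=0$ forces $v\in\cV\cap\cW=\{0\}$. The paper condenses this to one line, deferring the key facts that $w=0$ and $\la>0$ (when $g=\la v\in\cV$) to a citation of D\"orfler et al.; your sketch of why $w$ drops out when pairing against $\cV$ rests on the orthogonality $\int_\Om\langle V(x)w,\psi\rangle\,dx=0$ for $w\in\cW_2$, $\psi\in\cV$, which requires approximating $w\in\cW_2$ by smooth compactly supported curl-free fields -- a density step you do not spell out, but which the paper also leaves implicit (it is already tacitly used in the self-adjointness computation $\langle Kg,h\rangle_\cV=\int\langle Vg,h\rangle$ of the preceding proposition).
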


\begin{proof}
Observe that if \eqref{eq:operator} holds for some $g=\la v$, then $\la > 0$ and $w = 0$ (cf.\ \cite[Theorem 2.1.7]{Doerfler}).
\end{proof}

From now on we always assume that (L1)-(L2), (F0)-(F6) are satisfied. Then the functional $\fJ: W_0^p(\curl;\Om) \to \R$ given by
\[
\fJ(E) := \frac12\int_\Om\langle\mu(x)^{-1}\curlop E,\curlop E\rangle\,dx
            - \frac12\int_\Om \langle V(x)E,E\rangle\,dx - \int_\Om F(x,E)\,dx
\]
is well defined. For $E=v+w$ with $v\in\cV$ and $w\in\cW$ there holds
\[
\begin{aligned}
\fJ(v+w)
 &= \frac12\int_{\Om}\langle\mu(x)^{-1}\curlop v,\curlop v\rangle\,dx
      - \frac12\int_\Om \langle V(x)v,v\rangle + \langle V(x)w,w\rangle\,dx\\
 &\hspace{1cm}
      - \int_{\Om}F(x,v+w)\,dx.
\end{aligned}
\]
This functional is of class $\cC^1$ with
\[
\begin{aligned}
\fJ'(v+w)(\phi+\psi)
 &= \int_\Om\langle \mu(x)^{-1}\curlop v,\curlop \phi\rangle\; dx
      - \int_\Om(\langle V(x) v,\phi\rangle+\langle V(x)w,\psi\rangle)\; dx\\
 &\hspace{1cm}
      - \int_\Om\langle f(x,v+w),\phi+\psi\rangle \; dx
\end{aligned}
\]
for any $v,\phi\in\cV$ and any $w,\psi\in\cW$. We shall use the following norm in
$W_0^p(\curl;\Om) = \cV\oplus\cW$:
$$
\|v+w\| = \left(\|v\|_\V^2+\|w\|^2_\W\right)^{1/2}
 := \left(\langle\mu(x)^{-1}\curlop v,\curlop v\rangle_{L^2} + |w|^2_p\right)^{1/2}
 \qquad\text{for }v\in\cV,\ w\in\cW
$$
so that
\[
\fJ(v+w) = \frac12\|v\|_\cV^2 - \frac12\int_\Om \langle V(x)(v+w),v+w\rangle\,dx
           - \int_{\Om}F(x,v+w)\,dx,
\]
We can now formulate the variational approach to \eqref{eq:main}.

\begin{Prop}\label{PropSolutE}
$E = v+w \in W_0^p(\curl;\Om) = \cV \oplus \cW$ is a critical point of $\fJ$ if and only if it is a solution of \eqref{eq:main}.
\end{Prop}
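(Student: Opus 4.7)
The plan is to recognise this as a routine variational identification of critical points of $\fJ$ on $W^p_0(\curl;\Om)$ with weak solutions of \eqref{eq:main}--\eqref{eq:bc}; the only subtlety is the Helmholtz splitting that has been built into the definition of $\fJ$.

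First I would rewrite the derivative $\fJ'(v+w)(\phi+\psi)$ displayed just before the proposition in a form that makes the PDE transparent. Since $\curlop w=\curlop\psi=0$ for $w,\psi\in\cW$, one has $\curlop E=\curlop v$ and $\curlop(\phi+\psi)=\curlop\phi$. Moreover, by \eqref{eq:VperpW} together with the symmetry of $V(x)$,
\[
\int_\Om\bigl(\langle V(x)v,\psi\rangle + \langle V(x)w,\phi\rangle\bigr)\,dx = 0
\]
whenever $v,\phi\in\cV$ and $w,\psi\in\cW$. Combining these with the derivative formula collapses it to
\[
\fJ'(E)(\eta) = \int_\Om\langle \mu(x)^{-1}\curlop E,\curlop\eta\rangle\,dx - \int_\Om\langle V(x)E+f(x,E),\eta\rangle\,dx
\]
for $E=v+w$ and every $\eta=\phi+\psi\in W^p_0(\curl;\Om)$.

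Second, $\fJ'(E)=0$ on $W^p_0(\curl;\Om)$ if and only if the above integral identity holds for every $\eta\in\cC^\infty_0(\Om,\R^3)$. The forward implication is immediate since $\cC^\infty_0(\Om,\R^3)\subset W^p_0(\curl;\Om)$. The reverse follows from density of $\cC^\infty_0(\Om,\R^3)$ in $W^p_0(\curl;\Om)$ (the defining property of the latter space) together with continuity in $\eta$ of both sides with respect to the $W^p(\curl;\Om)$-norm; the nonlinear pairing is continuous because (F3), the embedding $\cV\hookrightarrow L^p$ from (L2), and the inclusion $\cW\subset L^p(\Om,\R^3)$ yield $E\in L^p(\Om,\R^3)$ and hence $f(\cdot,E)\in L^{p'}(\Om,\R^3)$. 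Testing the identity against $\eta\in\cC^\infty_0$ is precisely the distributional formulation of \eqref{eq:main}.

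Finally, the boundary condition \eqref{eq:bc} is automatic from $E\in W^p_0(\curl;\Om)\subset H_0(\curl;\Om)=\{E\in H(\curl;\Om):\gamma_t E=0\}$. I do not expect a genuine obstacle in this proof; the only points needing care are the bookkeeping of the cross terms via \eqref{eq:VperpW} that are responsible for the clean form of $\fJ'(E)(\eta)$, and the routine verification that $f(x,E)$ pairs continuously with every element of $W^p_0(\curl;\Om)$.
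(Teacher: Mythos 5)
Your proof is correct and is precisely the standard argument one expects here; the paper in fact states Proposition~\ref{PropSolutE} without proof, regarding it as routine. The key points you identify are right: the cross terms $\int_\Om \langle V(x)v,\psi\rangle\,dx$ and $\int_\Om\langle V(x)w,\phi\rangle\,dx$ vanish by \eqref{eq:VperpW} and symmetry of $V$, $\curlop w = \curlop\psi = 0$, and then $\fJ'(E)(\eta) = 0$ for all $\eta\in W^p_0(\curl;\Om)$ is equivalent (by density of $\cC^\infty_0(\Om,\R^3)$ in $W^p_0(\curl;\Om)$ and $W^p(\curl)$-continuity of $\eta\mapsto\fJ'(E)(\eta)$, the nonlinear term being handled via (F3) and H\"older since $f(\cdot,E)\in L^{p'}(\Om,\R^3)$) to the distributional formulation of \eqref{eq:main}, while \eqref{eq:bc} is encoded in $W^p_0(\curl;\Om)\subset H_0(\curl;\Om)$. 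One minor simplification: you do not need to invoke (L2) and the splitting to conclude $E\in L^p(\Om,\R^3)$ --- this holds directly from $E\in W^p_0(\curl;\Om)$ by the definition of the $W^p(\curl)$-norm.
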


\begin{Prop}\label{prop:0isolated}
Suppose the assumptions of  Theorem~\ref{thm:main} a) or b) or c) hold.

a) If $ E= v+w \in\cV \oplus \cW$ is a solution of \eqref{eq:main} with $J(E)>0$ then $\curlop(\mu(x)^{-1}\curlop v) \ne V(x)v$, in particular $v\ne0$.

b) If (F5) or (F6)-(F8) hold then the nontrivial critical values of $J$ are positive and bounded away from 0.
\end{Prop}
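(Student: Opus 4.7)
The plan for part (a) is a direct computation. I would argue by contradiction: assume $v\in\cV_0$, i.e.\ $\curlop(\mu(x)^{-1}\curlop v)=V(x)v$, so that \eqref{eq:main} collapses to $-V(x)w=f(x,v+w)$, while testing the hypothesized identity against $v$ yields $\|v\|_\cV^2=\int_\Om\langle V(x)v,v\rangle\,dx$. Combining these with the orthogonality \eqref{eq:VperpW} and with $F\ge 0$ (which follows from (F0) and (F4)), the energy simplifies to
\[
\fJ(E)=\tfrac{1}{2}\|v\|_\cV^2-\tfrac{1}{2}\int_\Om\langle V(x)v,v\rangle\,dx-\tfrac{1}{2}\int_\Om\langle V(x)w,w\rangle\,dx-\int_\Om F(x,E)\,dx\le 0,
\]
contradicting $\fJ(E)>0$. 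The special case $v=0$ is included since $0\in\cV_0$.

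For part (b), I would first rule out negative critical values. Under (F5), Remark~\ref{rem:convexity}(b) gives $\langle f_0(x,u),u\rangle\ge 2F_0(x,u)$, and under (F7) the stronger inequality $\langle f_0(x,u),u\rangle\ge\gamma F_0(x,u)$ with $\gamma>2$ holds. Combined with the critical-point identity $\fJ(E)=\tfrac{1}{2}\int_\Om\langle f(x,E),E\rangle\,dx-\int_\Om F(x,E)\,dx$, this forces $\fJ(E)\ge 0$. To rule out a sequence of critical points with $0<\fJ(E_n)\to 0$, I would invoke the variational constraints introduced in the critical-point-theory section: any nontrivial critical point with positive energy satisfies the defining relations of the Nehari-Pankov manifold $\cN$ under (F5), or of the constraint $\cM$ under (F6)-(F8), and the central estimate of that section will yield $\inf_\cN\fJ>0$ and $\inf_\cM\fJ>0$. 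The claimed uniform positive lower bound follows immediately.

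The hard part is precisely this last step, namely establishing $\inf_\cN\fJ>0$ (and analogously for $\cM$). A naive attempt to argue directly that $\fJ(E)\le 0$ whenever $\|E\|_{W^p(\curlop;\Om)}$ is small is bound to fail: the quadratic form $\|v\|_\cV^2-\int_\Om\langle V(x)v,v\rangle\,dx$ is indefinite on $\cV$ (vanishing on $\cV_0$ and negative on eigenspaces associated with Maxwell eigenvalues $\lambda_k<1$ from Corollary~\ref{Cor:Spectrum}), and the curl-free component $w$ is only controlled in $L^2$ by $V$, whereas the nonlinearity naturally lives in $L^p$. Overcoming this indefiniteness, together with the lack of coercivity on $\cW$, is exactly what the structural hypotheses (F5) or (F6)-(F8) are designed for, by forcing the Nehari-Pankov type mapping $(t,\psi)\mapsto\fJ(tv+\psi)$ on $\R_+\times\cW$ to have a unique strict maximum, with value bounded below uniformly once $v\in\cV\setminus\cV_0$ varies on the unit sphere.
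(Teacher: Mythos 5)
Your argument for part~(a) is correct and is in fact cleaner than the paper's. You test the eigenvalue relation $\curlop(\mu^{-1}\curlop v)=V(x)v$ against $v$, obtain $\|v\|_\cV^2=\int_\Om\langle V(x)v,v\rangle\,dx$, and then read off $\fJ(E)=-\tfrac12\int_\Om\langle V(x)w,w\rangle\,dx-\int_\Om F(x,E)\,dx\le0$ from orthogonality~\eqref{eq:VperpW} and $F\ge0$. The paper instead rewrites the PDE as $-V(x)w=f(x,v+w)$, tests with $E=v+w$, and has to invoke the convexity condition (F6)(i) twice to squeeze out $w=0$ and $f(x,v)=0$. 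Your route avoids (F6) entirely and shows the stronger statement that $\fJ\le0$ on the set $\{v+w:v\in\cV_0,\,w\in\cW\}$ regardless of whether $E$ solves the equation. This is a genuine simplification.

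Part~(b) has a real gap in the (F6)--(F8) case. You assert that ``the central estimate of that section will yield $\inf_\cN\fJ>0$ and $\inf_\cM\fJ>0$.'' The first is true (it is (A4) via Theorem~\ref{ThLink1}\,a)), but the second is false: $0\in\cM$ by the very definition~\eqref{eq:ConstraintM}, since $I'(0)=0$, so $\inf_\cM\fJ\le\fJ(0)=0$. The mountain pass value $c_\cM$ of Theorem~\ref{ThLink2} is a saddle level, not an infimum over $\cM$, and it gives no control on the possible accumulation of critical values at $0$ from above. Indeed the paper remarks just before introducing (F8) that under (F6)--(F7) alone there may exist a sequence of solutions $E_n$ with $J(E_n)\to0$, so the conclusion of part~(b) can actually fail without (F8) --- and your argument never uses (F8) at all. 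The paper's proof of this case is a compactness argument, not a lower-bound argument: assume $J(E_n)\to0$ for a sequence of nontrivial solutions in $\cM$, use the $(PS)^\cT$-condition on $\cM$ (Lemma~\ref{lemmaconv2}) together with the homeomorphism $m$ to extract $E_n\to E_0$ in $X$, deduce from $\int_\Om F(x,E_n)\,dx\to0$ and $J'(E_n)\to0$ that $E_0\in\cV_0$ and hence $E_0=0$ by (F6)(ii), and finally invoke the upper Ambrosetti--Rabinowitz bound (F8) to show that nontrivial critical points cannot satisfy $\|v_n^+\|_\cV\to0$, which contradicts $E_n\to0$. That last step is where (F8) is indispensable, and it cannot be replaced by a positivity estimate for $\inf_\cM\fJ$.
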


\begin{proof}
a) Suppose the claim is wrong so that $-V(x)w=f(x,v+w)$ holds. Testing this with $E=v+w$ and using \eqref{eq:VperpW}, (L1), (F1), (F6)(i), we are led to
\begin{equation}\label{eq:test}
0\ge -\int_\Om \langle V(x)w,w\rangle\,dx = \int_\Om \langle f(x,v+w),v+w\rangle\,dx \ge 0\,.
\end{equation}
This implies $w=0$ and $\int_\Om \langle f(x,v),v\rangle\,dx=0$. As a consequence of (F1), (F6)(i) this is possible only if $f(x,v)=0$ for a.e.\ $x\in\Om$, and $\int_\Om F(x,v)\,dx=0$. Then $v\in\cV_0$ and $J(E)=J(v)=0$, a contradiction.

b) This is postponed to Section 5 because we need to work out the appropriate tools.
\end{proof}

\section{Critical point theory on natural constraints}\label{sec:CriticalTheory}

Firstly we recall the critical point theory and the Nehari-Pankov manifold from \cite{BartschMederski1}. Let $X$ be a reflexive Banach space with norm $\|\cdot\|$ and with a topological direct sum decomposition $X = X^+\oplus \tX$, where $X^+$ is a Hilbert space with a scalar product. For $u \in X$ we denote by $u^+ \in X^+$ and $\tu  \in \tX$ the corresponding summands so that $u = u^++\tu $. We may assume that $\langle u,u \rangle = \|u\|^2$ for any
$u\in X^+$ and that $\|u\|^2 = \|u^+\|^2+\|\tu \|^2$. The topology $\cT$ on $X$ is defined as the product of the norm topology in $X^+$ and the weak topology in $\tX$. Thus $u_n \cTto u$ is equivalent to $u_n^+ \to u^+$ and $\tu_n \weakto \tu $.

Let $J \in \cC^1(X,\R)$ be a functional on $X$ of the form
\begin{equation}\label{EqJ}
J(u) = \frac12\|u^+\|^2-I(u) \quad\text{for $u=u^++\tu \in X^+\oplus \tX$}.
\end{equation}
We define the set
\begin{equation}\label{eq:NehariDef}
\cN := \{u\in X\setminus\{0\}: J'(u)|_{\R u\oplus \tX}=0,\ J(u)>0\}
\end{equation}
and suppose the following assumptions hold:
\begin{itemize}
\item[(A1)] $I\in\cC^1(X,\R)$ and $I(u)\ge I(0)=0$ for any $u\in X$.
\item[(A2)] $I$ is $\cT$-sequentially lower semicontinuous:
    $u_n\cTto u\quad\Longrightarrow\quad \liminf I(u_n)\ge I(u)$
\item[(A3)] If $u_n\cTto u$ and $I(u_n)\to I(u)$ then $u_n\to u$.
\item[(A4)] There exists $r>0$ such that $a:=\inf\limits_{u\in X^+:\|u\|=r} J(u)>0$.
\item[(B1)] $\|u^+\|+I(u)\to\infty$ as $\|u\|\to\infty$.
\item[(B2)] $I(t_nu_n)/t_n^2\to\infty$ if $t_n\to\infty$ and $u_n^+\to u^+$ for some $u^+\neq 0$ as $n\to\infty$.
\item[(B3)] $\frac{t^2-1}{2}I'(u)[u] + tI'(u)[v] + I(u) - I(tu+v) < 0$ for every $u\in \cN$, $t\ge 0$, $v\in \tX$ such that $u\neq tu+v$.
\end{itemize}

\begin{Prop}\label{prop:nehari}
For every $u \in SX^+ := \{u\in X^+:\|u\|=1\}$ the functional $J$ constrained to
$\R u+\tX = \{tu+v:t\ge0,\ v\in\tX\}$ has precisely two critical points $u_1,u_2$ with positive energy. These are of the form $u_1=t_1u+v_1$, $u_2=t_2u+v_2$ with $t_1>0>t_2$, $v_1,v_2\in\tX$. Moreover, $u_1$ is the unique global maximum of $J|_{\R^+u+\tX}$, and $u_2$ is the unique global maximum of $J|_{\R^-u+\tX}$. Moreover, $u_1$ and $u_2$ depend continuously on $u\in SX^+$.
\end{Prop}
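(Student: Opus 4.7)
The plan is to maximize $j(t,v):=J(tu+v)=\tfrac{t^{2}}{2}-I(tu+v)$ on the half-space $[0,\infty)\times\tX$; the case $t\le 0$ then follows by replacing $u$ with $-u$.

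\textbf{Existence.} By (A4), $j(r,0)=J(ru)\ge a>0$, so $\sup j\ge a$. Coercivity: if $t_n+\|v_n\|\to\infty$ then $j(t_n,v_n)\to-\infty$. When $t_n$ stays bounded, $\|v_n\|\to\infty$ combined with (B1) forces $I(t_nu+v_n)\to\infty$; when $t_n\to\infty$, applying (B2) to $u_n:=u+v_n/t_n$ (whose $X^{+}$-component is the fixed nonzero vector $u$) yields $I(t_nu+v_n)/t_n^{2}\to\infty$. A maximizing sequence $(s_n,v_n)$ is therefore bounded, so along a subsequence $s_n\to t_1\ge 0$ and $v_n\weakto v_1$. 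Since $J$ is $\cT$-upper semicontinuous (the $\tfrac12\|\cdot^{+}\|^{2}$ piece is norm-continuous on $X^{+}$ and $I$ is $\cT$-lower semicontinuous by (A2)), $j(t_1,v_1)\ge\sup j\ge a$. The case $t_1=0$ is ruled out by (A1), since $j(0,v)=-I(v)\le 0$. Hence $u_1:=t_1u+v_1$ with $t_1>0$ is an interior maximizer, so $J'(u_1)|_{\R u\oplus\tX}=0$, $J(u_1)>0$, and $u_1\in\cN$.

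\textbf{Uniqueness.} Let $u_1\in\cN$ lie in $\R^{+}u+\tX$. The critical point relations give $\|u_1^{+}\|^{2}=I'(u_1)[u_1]$ and $I'(u_1)[\tilde v]=0$ for every $\tilde v\in\tX$. Then for $s\ge 0$ and $\tilde v\in\tX$ with $su_1+\tilde v\ne u_1$,
\[
J(su_1+\tilde v)-J(u_1)=\tfrac{s^{2}-1}{2}I'(u_1)[u_1]+sI'(u_1)[\tilde v]+I(u_1)-I(su_1+\tilde v)<0
\]
by (B3). Since $u_1^{+}$ is a positive multiple of $u$, one has $\R^{+}u_1+\tX=\R^{+}u+\tX$, so $u_1$ is the unique global maximum of $J$ on $\R^{+}u+\tX$. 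Running the same argument with $-u$ in place of $u$ produces $u_2=t_2u+v_2$ with $t_2<0$ as the unique global maximum on $\R^{-}u+\tX$.

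\textbf{Continuity.} Let $u_n\to u$ in $SX^{+}$ and $w_n=t_n^{(1)}u_n+v_n^{(1)}$ denote the maximizer associated with $u_n$. Norm continuity of $J$ gives $J(w_n)\ge J(ru_n)\to J(ru)\ge a$, so the coercivity estimate forces $(t_n^{(1)},v_n^{(1)})$ to be bounded. Along a subsequence $t_n^{(1)}\to t_\infty\ge 0$ and $v_n^{(1)}\weakto v_\infty$, so $w_n\cTto w_\infty:=t_\infty u+v_\infty$. For any $s\ge 0$ and $\tilde v\in\tX$, norm continuity of $J$ at $su+\tilde v$ together with $\cT$-upper semicontinuity at $w_\infty$ yields
\[
J(su+\tilde v)=\lim_n J(su_n+\tilde v)\le\limsup_n J(w_n)\le J(w_\infty),
\]
so $w_\infty$ is a maximum on $\R^{+}u+\tX$ and equals $u_1$ by uniqueness; in particular $t_\infty=t_1>0$. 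Sandwiching $J(w_n)\to J(w_\infty)$ with $\|t_n^{(1)}u_n\|^{2}\to\|t_\infty u\|^{2}$ forces $I(w_n)\to I(w_\infty)$, and (A3) upgrades $\cT$-convergence to norm convergence $w_n\to u_1$. Continuity of $u_2$ follows by symmetry. The main obstacle is ensuring the strict inequality $t_1,t_\infty>0$ at each limit step, since a limit in $\tX$ would not yield a genuine critical point, and then converting the $\cT$-convergence produced by the weak compactness argument into norm convergence via (A3).
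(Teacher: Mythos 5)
Your argument is correct and follows essentially the same route as the paper: coercivity plus $\cT$-upper semicontinuity of $J$ on $\R^{+}u+\tX$ produce a maximizer, (A4) and (A1) force $t_1>0$ so the maximizer is an interior critical point in $\cN$, (B3) then yields that any critical point with positive energy is the strict global maximum (hence unique), and continuity is obtained from uniqueness together with (A3). The paper presents these steps more tersely (citing the earlier reference for the (B3) step and stating that continuity "follows easily"), while you spell out the coercivity by cases on $t_n$, the (B3) computation, and the limsup/liminf sandwich that upgrades $\cT$-convergence of the maximizers to norm convergence; the only minor point worth noting is that the coercivity estimate used in the continuity step should be stated with $u_n$ in place of $u$, which works verbatim since $\widetilde u_n^{+}=u_n\to u\neq 0$ still permits the application of (B2).
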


\begin{proof}
Using (A1)-(A4) and (B1)-(B2) one sees that $-J$ is weakly sequentially lower semi-continuous and coercive on $\R u+\tX$, for every $u\in X$. Therefore $J|_{\R^+u+\tX}$ has a global maximum $u_1=t_1u+v_1$, $t_1\ge0$, $v_1\in\tX$. Assumption (A4) implies $J(u_1)\ge a>0$, hence $u_1\notin\tX$, so $u_1$ is a critical point of $J|_{\R u+\tX}$ and $t_1>0$. If $u_0\in\R^+u+\tX$ is any critical point of $J|_{\R u+\tX}$ with $J(u)>0$ then $u_0\in\cN$. Now (B3) implies as in the proof of  \cite[Proposition~4.2]{BartschMederski1} that $u_0$ must be a strict global maximum of $J|_{\R^+u+\tX}$, hence $u_0=u_1$. Using this uniqueness property of $u_1$ it follows easily that $u_1$ depends continuously on $u$. Similarly one obtains $u_2$ as a global maximum of $J|_{\R^-u+\tX}$.
\end{proof}

For $u\in SX^+$ we set $n(u):=u_1$ with $u_1$ from Proposition~\ref{prop:nehari}. Observe that $n(-u)=u_2$ and
\begin{equation}\label{eq:NehariCharact}
\cN = \{u\in X\setminus\tX: J'(u)|_{\R u+\tX}=0,\,J(u)>0\} = \{n(u): u\in SX^+\},
\end{equation}
in particular, $\cN$ is a topological manifold, the Nehari-Pankov manifold. Clearly all critical points of $J$ with $J(u)>0$ lie in $\cN$. Since $J$ is not required to be $\cC^2$ the Nehari-Pankov manifold is just a topological manifold homeomorphic to $SX^+$. The functional $J$ is said to satisfy the {\em $(PS)_c^\cT$-condition in} $\cN$ if every $(PS)_c$-sequence $(u_n)_n$ for the unconstrained functional and such that $u_n\in\cN$ has a subsequence which converges in the $\cT$-topology:
\[
u_n \in \cN,\ J'(u_n) \to 0,\ J(u_n) \to c \qquad\Longrightarrow\qquad
u_n \cTto u\in X\ \text{ along a subsequence.}
\]
The following result is due to \cite{BartschMederski1}.

\begin{Th}\label{ThLink1}
Let $J \in \cC^1(X,\R)$ satisfy (A1)-(A4), (B1)-(B3), set $c_{\cN} = \inf_\cN J$ and let $J$ be coercive on $\cN$, i.e. $J(u)\to\infty$  as $\|u\|\to\infty$ and $u\in\cN$. Then the following holds:
\begin{itemize}
\item[a)] $c_{\cN}\ge a>0$ and $J$ has a $(PS)_{c_{\cN}}$-sequence in $\cN$.
\item[b)] If $J$ satisfies the $(PS)_{c_{\cN}}^\cT$-condition in $\cN$ then $c_{\cN}$ is achieved by a critical point of $J$.
\item[c)] If $J$ satisfies the $(PS)_c^\cT$-condition in $\cN$ for every $c$ and if $J$ is even then it has an unbounded sequence of critical values.
\end{itemize}
\end{Th}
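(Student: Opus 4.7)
The plan is to reduce everything to a minimax problem on the unit sphere $SX^+$ via the homeomorphism $n:SX^+\to\cN$ from Proposition~\ref{prop:nehari}, and then apply abstract (equivariant) critical point theory there.

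First I would introduce the reduced functional $\Psi:SX^+\to\R$, $\Psi(u):=J(n(u))$, which is continuous because $n$ is. The maximality of $n(u)$ on $\R^+u+\tX$ combined with (A4) yields $\Psi(u)\ge J(ru)\ge a$ for every $u\in SX^+$, where $r$ is as in (A4); hence $c_\cN=\inf_{SX^+}\Psi\ge a>0$. Next, the key identification is that critical points of $\Psi$ on $SX^+$ correspond bijectively, via $n$, to critical points of $J$ in $\cN$: the Euler-Lagrange condition defining $\cN$ already kills $J'(w)$ along $\R u+\tX$, while the remaining tangent directions in $SX^+$ (a complement of $\R u$ in $X^+$) account for the missing directions.

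For part~(a), I would apply the Ekeland variational principle to the bounded-below continuous functional $\Psi$ on the complete metric space $SX^+$ at level $c_\cN$, obtaining $u_n\in SX^+$ with $\Psi(u_n)\to c_\cN$ and approximate stationarity along $SX^+$. Setting $w_n:=n(u_n)\in\cN$, the maximality of $n(u_n)$ along $\R u_n+\tX$ gives $J'(w_n)|_{\R u_n+\tX}=0$, while the Ekeland estimate transported back through $n$ controls $J'(w_n)$ on the complementary tangent directions. Thus $(w_n)$ is a $(PS)_{c_\cN}$-sequence in $\cN$. For part~(b), coercivity of $J$ on $\cN$ ensures $(w_n)$ is bounded; the $(PS)_{c_\cN}^\cT$-condition extracts $w_n\cTto w_0$ along a subsequence. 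Testing $J'(w_n)\to 0$ against $w_n-w_0$, splitting into $X^+$ and $\tX$ components, and invoking (A2) together with the Hilbert structure of $X^+$, one gets $\|w_n^+\|\to\|w_0^+\|$ and $I(w_n)\to I(w_0)$; then (A3) upgrades the $\cT$-convergence to norm convergence, so $w_0$ is a critical point of $J$ with $J(w_0)=c_\cN$.

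For part~(c), if $J$ is even then $I$ is even, and the uniqueness statement of Proposition~\ref{prop:nehari} gives $n(-u)=-n(u)$, hence $\Psi$ is even on $SX^+$. I would then run Lusternik-Schnirelman theory on $SX^+$ with the Krasnoselski genus $\gamma$: define
\[
c_k:=\inf\bigl\{\sup_{u\in A}\Psi(u): A\subset SX^+ \text{ symmetric, compact, } \gamma(A)\ge k\bigr\}.
\]
Since $X^+$ is infinite-dimensional, $k$-dimensional symmetric spheres in $SX^+$ make every $c_k$ finite, and $a\le c_k\le c_{k+1}$. A standard equivariant $\cT$-deformation argument (using the $(PS)_c^\cT$-condition in $\cN$ and the continuous bijection $n$) shows each $c_k$ is a critical value. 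To see $c_k\to\infty$, suppose for contradiction $c_{k_j}\le C$; coercivity confines the associated critical points to a bounded set, and then the $(PS)_c^\cT$-condition forces the critical set $K_{[a,C]}$ of $J$ in $\cN$ at levels in $[a,C]$ to be $\cT$-sequentially compact, which via $n^{-1}$ gives an upper bound on the genus of neighborhoods of $K_{[a,C]}$ in $SX^+$, contradicting arbitrarily large $\gamma(A)$ in $\Psi^{-1}([a,C])$.

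The main obstacle is the Ekeland-to-$(PS)$ transport in part~(a): $\Psi$ is only continuous, not $\cC^1$, so one cannot differentiate $\Psi$ directly. The workaround is exactly the two-ingredient splitting: the ``$\R u+\tX$'' component of $J'(w_n)$ vanishes automatically by the definition of $n$, while the transverse component is controlled by the Ekeland variation on $SX^+$ together with the continuity of $n$. The same idea underlies the deformation construction needed in part~(c).
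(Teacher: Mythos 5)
Your plan is the Szulkin--Weth/Pankov reduction that the paper cites from \cite{BartschMederski1} and carries out explicitly for the companion Theorem~\ref{ThLink2} using the map $m$, so the architecture is the right one. Two points deserve correction. First, your stated ``main obstacle'' --- that $\Psi=J\circ n$ is only continuous --- is in fact \emph{not} an obstacle, and if it were true the Ekeland and deformation steps would not go through. The crux of this method, spelled out in steps (ii)--(iii) of the paper's proof of Theorem~\ref{ThLink2}, is that even though $n$ (resp.\ $m$) is merely a homeomorphism, the composite $J\circ n$ (resp.\ $J\circ m$) is of class $\cC^1$, with
$(J\circ n)'(u)[h]=t_u\,J'(n(u))[h]$ for $h\in T_uSX^+$, where $n(u)=t_u u+\tilde v$.
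One proves this by the squeeze argument on difference quotients using the maximality of $n(u)$ --- precisely the ``two-ingredient splitting'' you describe --- but the correct output is differentiability of $\Psi$, not a workaround for its nondifferentiability. Once $\Psi\in\cC^1(SX^+)$ is established, part~(a) is a routine Ekeland argument for a $\cC^1$, bounded-below functional and part~(c) is the symmetric mountain pass / genus theorem on $SX^+$; no ad hoc $\cT$-deformation in $\cN$ is needed.

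Second, in part~(b) the claim that ``(A2) together with the Hilbert structure of $X^+$'' yields $I(w_n)\to I(w_0)$ is a gap: (A2) gives only $\liminf I(w_n)\ge I(w_0)$, and $\|w_n^+\|\to\|w_0^+\|$ is already contained in $\cT$-convergence. For the matching $\limsup$ you must invoke (B3) with $t=1$. Since $w_n\in\cN$ and $v_n:=\tilde w_0-\tilde w_n\in\tX$, (B3) gives $I(w_n)<I(w_n^++\tilde w_0)-I'(w_n)[v_n]$; the first term tends to $I(w_0)$ by norm continuity of $I$ and $w_n^+\to w_0^+$, while $I'(w_n)[v_n]=-J'(w_n)[v_n]\to0$ because $J'(w_n)\to0$ and $(v_n)$ is bounded by coercivity. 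Only with that in hand does (A3) upgrade $\cT$-convergence to norm convergence, after which $J'(w_0)=0$ and $J(w_0)=c_\cN$ follow by continuity of $J$ and $J'$.
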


Condition (B3) seems to be very restrictive and not easy to check. A more natural condition employs the convexity of $I$ which in turn will be a consequence of the convexity of $F$. We consider the set
\begin{equation}\label{eq:ConstraintM}
\cM := \{u\in X:\, J'(u)|_{\tX}=0\}=\{u\in X:\, I'(u)|_{\tX}=0\}.
\end{equation}
Observe that the last equality follows from the form of $J$ in \eqref{EqJ}. $\cM$ is a (topological) manifold if the following holds:
\begin{itemize}
\item[(B4)] If $u\in\cM$ then $ I(u)<I(u+v)$ for every $v\in \tX$ with $v\neq0$.
\end{itemize}
Note that, if $I$ is strictly convex, then by (A1)-(A2) we easily see that (B4) is satisfied. Observe that for any $u\in X^+$ there is a unique $m(u)\in\cM$ such that
$m(u)^+=u$. Obviously $m(u)\in\cM$ is the unique global maximum of $J|_{u+\tX}$.

\begin{Rem}\label{rem:cNsubsetcM}
If (B3) and (B4) hold then $\cM\supset\cN$. More precisely, for each $u\in SX^+$ let $t_u>0$ be defined by $n(u)=t_uu+v$ with $v\in\wt X$. Then the map $\be_u:[0,\infty)\to\R$ defined by $\be_u(t)=J(m(tu))$ achieves its maximum at $t_u>0$. If $\be_u'(t)=J'(m(tu))[u]=0$ then $J'(m(tu))|_{\R u\oplus\wt X}=0$, hence $m(tu)\in\cN$ and $t=t_u$. It follows that $\be_u(t)$ is strictly increasing on $[0,t_u]$ and strictly decreasing on $[t_u,\infty)$. Thus $\cN=\{m(t_uu):u\in SX^+\}$ splits $\cM$ into two components:
\[
\cM\setminus\cN=\{m(tu):u\in SX^+,\, 0\le t<t_u\}\cup\{m(tu):u\in SX^+,\, t>t_u\}
\]
\end{Rem}

Our main result of this section reads as follows.

\begin{Th}\label{ThLink2}
Let $J \in \cC^1(X,\R)$ satisfy (A1)--(A4), (B1), (B2), (B4) and set
\begin{equation}
c_{\cM} = \inf_{\gamma\in\Gamma}\sup_{t\in [0,1]} J(\gamma(t))
\end{equation}
where
$$\Gamma:=\{\gamma\in\cC([0,1],\cM):\; \gamma(0)=0,\, \|\ga(1)^+\|>r, \hbox{ and } J(\gamma(1))<0\}.$$
Then the following holds:
\begin{itemize}
\item[a)] $c_{\cM}>0$ and $J$ has a $(PS)_{c_{\cM}}$-sequence in $\cM$.
\item[b)] If $J$ satisfies the $(PS)_{c_{\cM}}^\cT$-condition in $\cM$ then $c_{\cM}$ is achieved by a critical point of $J$.
\item[c)] If $J$ satisfies the $(PS)_c^\cT$-condition in $\cM$ for every $c$ and if $J$ is even then it has an unbounded sequence of critical values.
\item[d)] If in addition (B3) holds then $c_{\cM}=c_{\cN}$.
\end{itemize}
\end{Th}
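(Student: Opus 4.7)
The plan is to reduce the problem on the natural constraint $\cM$ to a classical mountain-pass problem for an auxiliary functional $\Psi$ defined on the Hilbert space $X^+$, via a homeomorphism $m\colon X^+\to\cM$ that sends $u$ to the unique maximizer of $J|_{u+\tX}$.

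First I would construct $m$. For each $u\in X^+$, assumptions (A1)--(A4), (B1), (B2) make $-J$ sequentially $\cT$-lower semicontinuous and coercive on the affine subspace $u+\tX$ (the coercivity following from $I(u+v)\to\infty$ as $\|v\|\to\infty$ via (B1)). Hence $J|_{u+\tX}$ attains its maximum, and (B4) forces uniqueness of the maximizer $m(u)\in\cM$. Continuity of $m$ follows from this uniqueness by the standard sub-subsequence argument, and $m$ is a bijection onto $\cM$ with inverse the $X^+$-projection. Setting $\Psi(u):=J(m(u))$, one has $\Psi(u)\ge J(u)$ for $u\in X^+$ (take $v=0$), so by (A4), $\Psi(u)\ge a$ on $\{\|u\|=r\}$; along any ray, writing $\Psi(tu)=\tfrac12 t^2\|u\|^2-I(m(tu))$ and applying (B2) to $m(tu)/t$ (whose $X^+$-component is the constant $u\ne 0$) gives $\Psi(tu)\to-\infty$ as $t\to\infty$; and $\Psi(0)=0$.

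The core analytic fact is that $\Psi\in\cC^1(X^+,\R)$ with $\Psi'(u)=J'(m(u))|_{X^+}$, because any first-order variation of $m(u)$ within $\tX$ is annihilated by $J'(m(u))$ (as $m(u)\in\cM$). Thus critical points of $\Psi$ correspond bijectively to critical points of $J$, and any $(PS)_c$-sequence for $\Psi$ lifts via $m$ to a $(PS)_c$-sequence for $J$ that automatically lies in $\cM$ and satisfies $J'(m(u_n))|_{\tX}=0$. Under $m$, the class $\Ga$ corresponds to paths in $X^+$ from $0$ ending at a point of norm $>r$ and negative $\Psi$-value, so $c_\cM$ is the classical mountain-pass value for $\Psi$. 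Part (a) then follows by applying Ekeland's variational principle (or the quantitative deformation lemma) to $\Psi$. For (b), the $(PS)^\cT$-condition in $\cM$ produces a $\cT$-limit $u^\ast$; using (A2), (A3), and $J'(m(u_n))\to 0$, one verifies $u^\ast\in\cM$, $J(u^\ast)=c_\cM$, and $J'(u^\ast)=0$. For (c), when $J$ is even the uniqueness of $m$ together with $J(-w)=J(w)$ forces $m(-u)=-m(u)$, so $\Psi$ is even, and a standard symmetric mountain-pass/genus argument on $X^+$ yields an unbounded sequence of critical values.

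For part (d), assuming additionally (B3), Proposition~\ref{prop:nehari} gives a continuous map $SX^+\to\cN$, $u\mapsto n(u)=t_uu+v_u$, with $u\mapsto t_u>0$ continuous. By Remark~\ref{rem:cNsubsetcM}, under $m$ the set $\cN$ corresponds to the topological sphere $S:=\{t_uu\colon u\in SX^+\}\subset X^+$, and $X^+\setminus S$ splits into the inside $\{tu\colon 0\le t<t_u\}$ (containing $0$) and the outside $\{tu\colon t>t_u\}$. For $c_\cM\le c_\cN$, given $u\in SX^+$ choose $R>t_u$ with $\Psi(Ru)<0$ and take $\ga_u(s):=m(sRu)$; since $\be_u(t):=\Psi(tu)$ achieves its maximum at $t_u$, one has $\max_sJ(\ga_u(s))=\be_u(t_u)=J(n(u))$, and infimizing gives $c_\cM\le c_\cN$. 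For the reverse, any $\ga\in\Ga$ corresponds under $m$ to a continuous path $\wti\ga$ in $X^+$ with $\wti\ga(0)=0$ and $\Psi(\wti\ga(1))<0$; since $\be_w>0$ on $(0,t_w]$ (strictly increasing from $\be_w(0)=0$), the condition $\Psi(\wti\ga(1))<0$ places $\wti\ga(1)$ outside $S$, so by connectedness $\wti\ga$ crosses $S$ at some $s^\ast$, yielding $\max_sJ(\ga(s))\ge\Psi(\wti\ga(s^\ast))\ge\inf_{u\in SX^+}J(n(u))=c_\cN$.

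The main obstacle is making the reduction to $\Psi$ rigorous: showing that the implicitly defined $m$ is continuous and that $\Psi$ is genuinely $\cC^1$ with $\Psi'(u)=J'(m(u))|_{X^+}$, which underpins all four parts. This requires a careful combination of the $\cT$-lower semicontinuity (A2), the convergence criterion (A3), and the uniqueness (B4). A secondary subtlety is the separation argument in (d), which requires $u\mapsto t_u$ to be continuous and bounded away from $0$ on $SX^+$ so that $S$ genuinely separates $X^+$; this is supplied by the uniqueness part of Proposition~\ref{prop:nehari}, which is where (B3) enters.
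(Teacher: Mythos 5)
Your plan — reduce to the auxiliary functional $\Psi=J\circ m$ on $X^+$, establish mountain-pass geometry, transport $(PS)$ sequences via $m$, and in (d) use the separation of $\cM$ by $\cN$ from Remark~\ref{rem:cNsubsetcM} together with Proposition~\ref{prop:nehari} — is exactly the paper's strategy, and parts (a)--(d) all go through as you sketch. However, the one step you flag as "the main obstacle" you then dispose of with an argument that does not actually work: you claim $\Psi\in\cC^1$ with $\Psi'(u)=J'(m(u))|_{X^+}$ "because any first-order variation of $m(u)$ within $\tX$ is annihilated by $J'(m(u))$." That is a chain-rule heuristic, but $m$ is only constructed as a homeomorphism — nothing in (A1)--(A4), (B1), (B2), (B4) makes $m$ Gâteaux-differentiable, so there is no "first-order variation of $m(u)$" to speak of, and you cannot differentiate through $m$. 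The actual mechanism is different: writing $m(u+hv)=u+hv+\tu(h)$, the minimizing property of $m(\cdot)$ for $I$ in each affine slice (which is precisely (B4)) gives, via the mean value theorem, the two-sided estimate
$$
I'(\theta_1(h))(hv)\;\le\; I(m(u+hv))-I(m(u))\;\le\; I'(\theta_2(h))(hv)
$$
with $\theta_1(h),\theta_2(h)\to m(u)$ as $h\to0$ by continuity of $m$; letting $h\to0$ yields $(I\circ m)'(u)=I'(m(u))$ directly, without ever differentiating $m$. This sandwich is the heart of the reduction (it is what makes (B4) indispensable and not merely convenient), and it must replace the chain-rule reasoning. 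Once that is fixed, the rest of your proposal matches the paper's proof in substance.
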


\begin{proof}
Recall that for any $u\in X^+$ there is a unique $m(u)\in\cM$ with $m(u)^+=u$. We claim that:
\begin{itemize}
\item[(i)] $m:X^+\to \cM$  is a homeomorphism with inverse $\cM\ni u\mapsto u^+\in X^+$.
\item[(ii)] $J\circ m:X^+\to\R$ is $\cC^1$.
\item[(iii)] $(J\circ m)'(u) = J'(m(u))|_{X^+}:X^+\to\R$ for every $u\in X^+$.
\item[(iv)] $(u_n)_n\subset X^+$ is a Palais-Smale sequence for $J\circ m$ if, and only if, $(m(u_n))_n$ is a Palais-Smale sequence for $J$ in $\cM$.
\item[(v)] $u\in X^+$ is a critical point of $J\circ m$ if, and only if, $m(u)$ is a critical point of $J$.
\item[(vi)] If $J$ is even, then so is $J\circ m$.
\end{itemize}
Now we prove these statements.

(i) Let $u_n\to u_0$ in $X^+$ and $m(u_n)=u_n+v_n$, where $v_n\in \tX$ for all $n\geq 0$. In view of (B4) one has
\begin{equation}\label{eqM:1}
I(m(u_n))\leq I(u_n)\leq I(u_0)+1
\end{equation}
for almost all $n$. Now (B1) implies that $v_n$ is bounded, so we may assume that $v_n\weakto v_0$. As a consequence of (A2) and (B4) we deduce
$$I(m(u_0))\leq I(u_0+v_0)\leq \liminf I(m(u_n))\leq \liminf I(u_n+(m(u_0)-u_0))=I(m(u_0)).$$
Finally, using (A3) and (B4) we obtain $m(u_n)\to m(u_0)=u_0+v_0$.

(ii) Let $u,v\in X^{+}$ and $h\in\R$. Let $m(u+hv)=u+hv+\tu(h)$ for some $\tu(h)\in \tX$.
Observe that by (B4) and by the mean value theorem
\begin{eqnarray*}
I(m(u+hv))-I(m(u))
&\geq& I(u+hv+\tu(h))-I(u+\tu(h))\\
&=& I'(\theta_1(h))(hv)
\end{eqnarray*}
for some $\theta_1(h)\to u+\tu(0)$ as $h\to 0$.
Similarly we have
\begin{eqnarray*}
I(m(u+hv))-I(m(u))
&\leq& I(u+hv+\tu(0))-I(u+\tu(0))\\
&=& I'(\theta_2(h))(hv)
\end{eqnarray*}
for some $\theta_2(h)\to u+\tu(0)$ as $h\to 0$. Thus we obtain
\begin{equation}\label{DefOfJm'}
(I\circ m)'(u)(v) = \lim_{h\to 0}\frac{I(m(u+hv))-I(m(u))}{h} = I'(m(u))(v).
\end{equation}
Using (i) it follows that $(I\circ m)'(u)$ is continuous, therefore $I\circ m$ and $J\circ m$ are of class $\cC^1$ and (ii) holds.

Observe that (iii) follows from $(I\circ m)'(u) = I'(m(u))$ and from the form of $J$ given in \eqref{EqJ}. Finally, (iv), (v) and (vi) are easy consequences of the definition of $m$.

Next we prove that $J\circ m$ has the classical mountain pass geometry. Assumption (A4) implies \begin{equation}\label{eq:mp1}
J\circ m(u) \ge J(u) \ge a > 0 \text{ if } \|u\|=r.
\end{equation}
In order to see for $0\ne u\in X^+$ that
\begin{equation}\label{eq:mp2}
J\circ m(tu) = \frac12\|m(tu)^+\|^2 - I(m(tu)) \to -\infty \text{ as }t\to\infty
\end{equation}
write $m(tu)=tu+\wt{u}_t$ with $\wt{u}_t\in\wt X$, and set
$u_t = u+\frac1t \wt{u}_t = \frac1t m(tu)$. Then
\[
\frac{1}{t^2}I(m(tu)) = \frac{1}{t^2}I(tu_t) \to \infty\quad\text{as }t\to\infty
\]
by (B2). The mountain pass condition \eqref{eq:mp2} follows immediately. Setting
$$
\Si:=\{\si\in\cC([0,1],X^+):\, \si(0)=0,\ \|\si(1)^+\|>r \hbox{ and }
            J\circ m(\si(1))<0\}
$$
the mountain pass value for $J\circ m$ is given by:
$$
c_{\cM}=\inf_{\sigma\in\Sigma}\sup_{t\in [0,1]}J\circ m(\sigma(t)) \ge a > 0.
$$
In view of the mountain pass theorem and using (iv), there exists a $(PS)_{c_{\cM}}$-sequence $(u_n)_n$ for $J$ in $\cM$, which proves a).

In order to prove b) we consider a $(PS)_c$-sequence $(u_n)_n\subset X^+$ for $J\circ m$. Then $(m(u_n))_n$ is a Palais-Smale sequence for $J$ in $\cM$ by (iv), hence $m(u_n)\cTto v$ after passing to a subsequence. This implies $u_n=m(u_n)^+\to v^+$ and we have proved:
\begin{itemize}
\item[(vii)] If $J$ satisfies the $(PS)_c^\cT$-condition in $\cM$ for some $c$ then
   $J\circ m$ satisfies the $(PS)_c$-condition.
\end{itemize}
Next observe that if $J$ satisfies the $(PS)_{c_{\cM}}^\cT$-condition in $\cM$ then $c_{\cM}$ is achieved by a critical point $u\in X^+$ of $J\circ m$, hence $m(u)\in\cM$ is a critical point of $J$ with $J(m(u))=c_{\cM}$. This implies b).

c) follows from the classical symmetric mountain pass theorem. The condition \eqref{eq:mp2} implies that for every finite-dimensional subspace $Y\subset X^+$ there exists $R=R(Y)>0$ such that $J\circ m\le0$ on $Y\setminus B_RY$. Therefore together with \eqref{eq:mp1} and the Palais-Smale condition $J\circ m$ satisfies the hypotheses of \cite[Theorem~9.12]{Rabinowitz:1986}, hence it possesses an unbounded sequence of critical values.

It remains to prove d), so we assume that (B3) holds. Given $u\in\cN$ by \eqref{eq:mp2} there exists $t_0>0$ such that $J(m(t_0u^+))<0$. Therefore the path $\ga(t)=m(tt_0u^+)$, $t\in [0,1]$, lies in $\Ga$. Since $u$ is the unique maximum of $J$ on $\R^+u+\wt X$ there holds $J(\gamma(t))\leq J(u)$, and therefore $c_{\cM}\leq c_{\cN}$. In order to see the reverse inequality observe that Remark~\ref{rem:cNsubsetcM} implies that for any $\ga\in\Ga$ there exists $t\in[0,1]$ with $\ga(t)\in\cN$.
\end{proof}

\section{Proof of Theorem~\ref{thm:main}}\label{sec:proof-main}

We want to find critical points of the functional $\fJ:X:=W_0^p(\curl;\Om)\to\R$ from \eqref{EqJ}. We assume (L1)-(L2), (F0)-(F4), and (F5) or (F6)-(F7). If (F5) holds we shall apply Theorem~\ref{ThLink1}, if (F6)-(F7), may be (F8), hold we shall apply Theorem~\ref{ThLink2}. Recall that (F5) implies (F6), so in the sequel we shall assume (L1)-(L2) and (F0)-(F4) as well as (F6), often without mentioning, but we shall always state when we use (F5),(F7), or (F8).

In order to define $X^+$ and $\tX$ let $0<\la_1<\la_2<\ldots$ be the sequence of eigenvalues (with finite multiplicities) of the curl-curl source problem from Corollary~\ref{Cor:Spectrum}. Let $\cV^+$ be the positive eigenspace of the quadratic form $Q:\cV\to\R$ defined by
$$
Q(v):=\int_\Om \left(\langle\mu(x)^{-1}\curlop v,\curlop v\rangle -
         \langle V(x)v,v\rangle\right)\,dx,
$$
and let $\tcV$ be the semi-negative eigenspace of $Q$. Then $\tcV$ is the finite sum of the eigenspaces associated to all $\la_k\le1$, and $\cV^+$ is the infinite sum of the eigenspaces associated to the eigenvalues $\la_k>1$. Here $\tcV=\{0\}$ if $\la_1>1$, of course. Observe that
\begin{equation}\label{NormInU0}
Q(v)\geq \Big(1-\frac{1}{\lambda_m}\Big)\int_{\Om}\langle\mu(x)^{-1}\curlop v,\curlop v\rangle\,dx
\qquad\text{for any }v\in \cV^+,
\end{equation}
where $m=\min\{k\in\N_0:\la_k>1\}$. If $m\ge2$ and $\la_{m-1}<1$ then
\begin{equation}\label{CondInU1Ker}
Q(v) \leq -\Big(\frac{1}{\lambda_{m-1}}-1\Big)
           \int_{\Om}\langle\mu(x)^{-1}\curlop v,\curlop v\rangle\,dx
\qquad\text{for any }v\in \tcV.
\end{equation}
If $\la_{m-1}=1$ then the kernel of the operator $\curlop(\mu(x)^{-1}\curlop)- V(x)$ is just the eigenspace associated to $\la_{m-1}$. For $v\in\V$ we denote by $v^+\in\V^+$ and $\tv \in\tcV$ the corresponding summands such that $v=v^++\tv$. Now we define $X^+:=\cV^+$ and $\tX:=\tcV\oplus\cW$.

The functional $\fJ:X\to\R$ from Section~\ref{sec:prelim} has the form
\[
\fJ(v+w) = \frac12\|v^+\|^2 - \fI(v+w)
\]
as in \eqref{EqJ} with
\[
\begin{aligned}
\fI(v+w)
 &= -\frac12\|\tv\|_\cV^2 + \frac12\int_\Om\langle V(x)(v+w),v+w\rangle\,dx + \int_\Om F(x,v+w)\\
 &= -\frac12\|\tv\|_\cV^2 + \frac12\int_\Om\langle V(x)v,v\rangle\,dx
     +\frac12\int_{\Om}\langle V(x)w,w\rangle\,dx + \int_\Om F(x,v+w).
\end{aligned}
\]
Now we show that $\fJ$ satisfies the assumptions (A1)-(A4) as well as (B1), (B2), (B4) from Section~\ref{sec:CriticalTheory}. This requires (F0)-(F4) and (F6).

\begin{Lem} If (L1) and (F4) hold then there exists $d'>0$ such that
\begin{equation}\label{eqest3}
\frac12\int_{\Om}\langle V(x)u,u\rangle\,dx+\int_{\Om} F(x,u)\, dx \geq d'|u|^{p}_{p}\quad
\textnormal{ for any } u\in L^p(\Om,\R^3).
\end{equation}
\end{Lem}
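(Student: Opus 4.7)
The natural approach is to establish the inequality pointwise, that is, to show the existence of $d'>0$ such that
\[
\frac{1}{2}\langle V(x)u,u\rangle + F(x,u) \ge d'|u|^p
\qquad\text{for a.e.\ }x\in\Om\text{ and every }u\in\R^3,
\]
and then integrate over $\Om$. Since (L1) provides a uniform constant $v_0>0$ with $\langle V(x)u,u\rangle\ge v_0|u|^2$, and (F4) gives $p$-th power growth of $F_0$ only at infinity, the natural way to reconcile these is to split $\R^3$ into the regions $\{|u|\le R_1\}$ and $\{|u|\ge R_1\}$ for an appropriately chosen threshold $R_1$.

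First I would use (F4) to select $R_0>0$ such that $F_0(x,v)\ge d|v|^p$ for every $v\in\R^3$ with $|v|\ge R_0$ and a.e.\ $x\in\Om$ (this is where the uniform liminf is used). Then, invoking (F0), for $|u|$ large the truncation gives $|\chi(u)|=|u|-\de$, so I need to choose $R_1$ large enough to guarantee simultaneously $|u|-\de\ge R_0$ and $|u|-\de\ge |u|/2$; the choice $R_1:=\max\{R_0+\de,2\de\}$ works. For $|u|\ge R_1$ this yields
\[
F(x,u) = F_0(x,\chi(u)) \ge d(|u|-\de)^p \ge \frac{d}{2^p}|u|^p,
\]
and combined with $F\ge0$ (also from (F4)) and the trivial bound $\frac12\langle V(x)u,u\rangle\ge0$, this gives the desired estimate on the exterior region.

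In the interior region $|u|\le R_1$, the elementary bound $|u|^p \le R_1^{p-2}|u|^2$ together with (L1) yields
\[
\frac{1}{2}\langle V(x)u,u\rangle
 \ge \frac{v_0}{2}|u|^2
 \ge \frac{v_0}{2R_1^{p-2}}|u|^p,
\]
and since $F(x,u)\ge0$ by (F4), the same inequality holds for the sum. Setting
\[
d' := \min\left\{\frac{v_0}{2R_1^{p-2}},\,\frac{d}{2^p}\right\}>0
\]
the pointwise bound holds on all of $\R^3$; integrating over $\Om$ and using that $u\in L^p(\Om,\R^3)$ implies both sides are finite completes the argument.

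The only step that is not completely routine is the transfer of the growth from $F_0$ to $F$ across the truncation $\chi$ in (F0); everything else is a standard small/large dichotomy. Since $|\chi(u)|=|u|-\de$ is merely a shift, this transfer causes no real difficulty, so I do not expect any serious obstacle.
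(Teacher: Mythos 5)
Your proof is correct and takes essentially the same small/large dichotomy approach as the paper: use (L1) and $|u|^p\le R_1^{p-2}|u|^2$ on the small region and the $p$-th power growth from (F4) on the large region. You are somewhat more careful than the paper in explicitly tracking the transfer of growth from $F_0$ to $F=F_0(\cdot,\chi(\cdot))$ through the shift $|\chi(u)|=|u|-\de$, a detail the paper glosses over when it asserts $F(x,u)\ge d|u|^p$ for $|u|>M$.
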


\begin{proof}
In view of (F4) we find $M>0$ such that $F(x,u)\geq d |u|^p$ for $|u|>M$. Observe that there is a constant $V_0>0$ such that
\begin{eqnarray*}
\frac12\int_{\Om}\langle V(x)u,u\rangle\,dx+\int_{\Om} F(x,u)\, dx
&\geq& V_0 \int_{\Om}|u|^2\,dx+d\int_{|u|>M}|u|^p\,dx\\
&\geq& V_0 \int_{|u|\leq M}|u|^2\,dx+d\int_{|u|>M}|u|^p\,dx\\
&\geq& d'|u|^p_p
\end{eqnarray*}
where $d'=\min\{V_0 M^{2-p},d\}>0$
\end{proof}

The next lemma shows that (A1)-(A4) and (B1), (B2) hold.

\begin{Lem}\label{LinkingLemma} Suppose (L1)-(L2), (F0)-(F4) and (F6) hold.
\begin{itemize}
\item[\rm a)] $\fI$ is of class $\cC^1$,  $\fI(E)\geq 0$ for any $E\in X$, and $\fI$ is $\cT$-sequentially lower semicontinuous.
\item[\rm b)]  If $E_n\cTto E$ and $I(E_n)\to I(E)$ then $E_n\to E$.
\item[\rm c)] There is $r>0$ such that \ \
$\displaystyle 0<\inf_{\stackrel{v\in \V^+}{\|v\|_\V=r}}\fJ(v)$.
\item[\rm d)] $\|v_n^+\|_\V + \fI(v_n+w_n)\to\infty$ as $\|v+w\|\to\infty$.
\item[\rm e)] $\fI(t_n(v_n+w_n))/t_n^2\to\infty$ if $t_n\to\infty$ and $v_n^+\to v_0^+\neq 0$ as $n\to\infty$.
\end{itemize}
\end{Lem}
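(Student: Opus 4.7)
The plan is to verify (a)--(e) by exploiting the decomposition
\[
\fI(v+w) = -\tfrac12\|\tv\|_\cV^2 + \tfrac12\int_\Om\langle V v,v\rangle\,dx + \tfrac12\int_\Om\langle V w,w\rangle\,dx + \int_\Om F(x,v+w)\,dx
\]
together with two $V$-weighted orthogonalities: $\int_\Om\langle Vv,w\rangle\,dx=0$ from \eqref{eq:VperpW}, and $\int_\Om\langle Vv^+,\tv\rangle\,dx=0$ since eigenspaces of the curl--curl source problem for distinct eigenvalues are both $\cV$- and $V$-orthogonal (one checks this directly using that any eigenfunction with eigenvalue $\la_j$ satisfies $\int\langle\mu^{-1}\curlop v_j,\curlop v_k\rangle=\la_j\int\langle Vv_j,v_k\rangle$). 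Since $Q(\tv)\le 0$ on $\tcV$ by definition, $\int\langle V\tv,\tv\rangle\ge\|\tv\|_\cV^2$, absorbing the negative quadratic in $\tv$:
\[
\fI(v+w) \ge \tfrac12\int_\Om\langle Vv^+,v^+\rangle\,dx + \tfrac12\int_\Om\langle Vw,w\rangle\,dx + \int_\Om F(x,v+w)\,dx \ge 0.
\]
This handles nonnegativity in (a); the $\cC^1$ property is routine from (F1)--(F3). For $\cT$-sequential lower semicontinuity, a $\cT$-convergent $E_n$ has $v_n^+\to v^+$ strongly in $\cV$ and $\tv_n\to\tv$ strongly (finite-dimensional $\tcV$), so the first two terms of the decomposition, which depend only on $v_n=v_n^++\tv_n$, converge; the remaining terms $\frac12\int\langle Vw,w\rangle$ and $\int F(x,v+w)\,dx$ are convex and continuous on $L^p$, hence weakly sequentially lower semicontinuous. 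For (c), on $\cV^+$ one has $\fJ(v)=\tfrac12 Q(v)-\int F(x,v)\,dx$; using \eqref{NormInU0}, the standard bound $\int F(x,v)\,dx\le\eps|v|_2^2+C_\eps|v|_p^p$ from (F2)--(F3), and the continuous embeddings $\cV\hookrightarrow L^2,L^p$, choose $\eps$ small and then $r$ small to obtain $\fJ(v)\ge\tfrac14(1-1/\la_m)r^2>0$ when $\|v\|_\cV=r$.

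For (b), since the sum $\fI(E_n)$ converges to $\fI(E)$ and the four summands in the decomposition have either strong convergence (two of them) or weak lower semicontinuity with nonnegative values (the other two), each lower semicontinuous summand must converge individually to its limit. In particular, $\int\langle Vw_n,w_n\rangle\to\int\langle Vw,w\rangle$ with $w_n\weakto w$ in $L^p\hookrightarrow L^2$, and the Hilbertian uniform convexity of the $V$-weighted $L^2$ inner product yields $w_n\to w$ in $L^2$, hence a.e.\ along a subsequence. A standard Brezis--Lieb argument applied to $F$ (justified by (F1)--(F3)) then gives $\int F(x,E_n-E)\,dx\to 0$; since $\int\langle V(E_n-E),E_n-E\rangle\,dx\to 0$ also, the coercivity \eqref{eqest3} applied to $u=E_n-E$ forces $|E_n-E|_p\to 0$, which combined with the strong $\cV$-convergence of $v_n$ gives $E_n\to E$ in $X$. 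This is the main technical step of the lemma.

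For (d), I argue by contradiction: assume $\|E_n\|_X\to\infty$ while both $\|v_n^+\|_\cV$ and $\fI(E_n)$ are bounded. From $\fI\ge\tfrac12\int\langle Vw_n,w_n\rangle+\int F(x,E_n)\,dx$, (F4) (via \eqref{eqest3}) yields that $|w_n|_2$ and $|E_n|_p$ are bounded; the spectral gap \eqref{CondInU1Ker} bounds $\|\tv_n^-\|_\cV$, where $\tv_n^-$ is the part in strictly negative eigenspaces. For the kernel part $\tv_n^0\in\cV_0$, use the orthogonality $\int\langle V\tv_n^0,w_n\rangle=0$ from \eqref{eq:VperpW} to get
\[
\int_\Om\langle V\tv_n^0,\tv_n^0\rangle\,dx \le \int_\Om\langle V(\tv_n^0+w_n),\tv_n^0+w_n\rangle\,dx \le |V|_\infty|\tv_n^0+w_n|_2^2 \le C,
\]
because $|\tv_n^0+w_n|_p\le|E_n|_p+|v_n^+|_p+|\tv_n^-|_p$ is bounded and $\Om$ is bounded; finite-dimensionality of $\cV_0$ then bounds $\|\tv_n^0\|_\cV$, hence $\|v_n\|_\cV$, and finally $|w_n|_p\le|E_n|_p+|v_n|_p$ is bounded, contradicting $\|E_n\|_X\to\infty$. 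For (e), the nonnegativity of the bracket from (a) gives
\[
\fI(t_nE_n)/t_n^2\ge \int_\Om F(x,t_nE_n)/t_n^2\,dx\ge(d/2)t_n^{p-2}|E_n|_p^p-C|\Om|/t_n^2;
\]
the same orthogonality estimate shows $|E_n|_2^2\ge(V_0/|V|_\infty)|v_n^+|_2^2\to(V_0/|V|_\infty)|v_0^+|_2^2>0$, so $|E_n|_p\ge c'>0$ in the bounded domain, and since $t_n\to\infty$ with $p>2$, the right-hand side tends to $+\infty$.
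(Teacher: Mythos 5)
Your proposal is correct, and it makes the $V$-orthogonality of $\cV^+$, $\tcV$, $\cW$ explicit (via the eigenvalue relation $\langle v_j,v_k\rangle_\cV=\la_j\int\langle Vv_j,v_k\rangle$) in a way the paper leaves implicit; this is used cleanly to get the pointwise lower bound $\fI\ge\tfrac12\int\langle Vv^+,v^+\rangle+\tfrac12\int\langle Vw,w\rangle+\int F$. Parts (a)--(c) then follow essentially as in the paper: the $\cT$-lsc comes from convexity, (b) is a term-by-term convergence plus Radon--Riesz/Brezis--Lieb argument (the paper uses the FTC-plus-Vitali variant, which is equivalent here), and (c) is the standard spectral-gap estimate. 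For (d) and (e) you take a genuinely different and, in the case of (e), cleaner route. In (e) the paper estimates $\fI(t_nE_n)/t_n^2\ge-\tfrac{C_2}{2}|E_n|_p^2+d't_n^{p-2}|E_n|_p^p$ and argues by a case distinction on $\|E_n\|$; you avoid this entirely by using only $\fI\ge\int F$, the lower bound $F(x,u)\gtrsim|u|^p-C$ from (F4), and the uniform positive lower bound $|E_n|_p\gtrsim|v_0^+|_2>0$ coming from the $V$-orthogonality. In (d) you split $\tv_n=\tv_n^-+\tv_n^0$ and bound the kernel part separately, whereas the paper bounds $\|\tv_n\|_\cV$ all at once via the chain \eqref{EqOrthU_0}. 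Both work, and both ultimately rest on finite-dimensionality of $\tcV$ and the weighted-$L^2$ orthogonality.

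One small imprecision in (d): you write that ``(F4) (via \eqref{eqest3}) yields that $|w_n|_2$ and $|E_n|_p$ are bounded,'' but \eqref{eqest3} involves $\tfrac12\int\langle Vu,u\rangle+\int F(x,u)$, and at that stage of the argument $\int\langle VE_n,E_n\rangle$ is not yet controlled because the kernel component $\tv_n^0$ has not yet been bounded. What you actually need (and what your argument effectively uses) is the direct consequence of (F0) and (F4): $F(x,u)\ge c|u|^p$ for $|u|$ large, together with boundedness of $\Om$, which gives $|E_n|_p$ bounded from $\int F(x,E_n)$ bounded alone. With that small rewording the proof of (d) is sound.
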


\begin{proof} a) Since $Q$ is negative semi-definite on $\tcV$ and using (F4) we deduce that $\fI(v+w)\geq 0$ for any $v\in\cV$, $w\in\cW$. The convexity condition (F6) implies that $\fI$ is $\cT$-sequentially lower semicontinuous, and $\fI$ is of class $\cC^1$ as a consequence of (F1)-(F3). Thus we obtain a).

b) Consider $E_n,E\in X$ such that $E_n\cTto E$ and $\fI(E_n)\to\fI(E)$. Writing $E_n=v_n+w_n$, $E=v+w$ with $v_n,v\in\cV$, $w_n,w\in\cW$ we have $v_n^+\to v^+$, $\tv_n\weakto\tv$ in $\cV$, $w_n\weakto w$ in $\cW$. Passing to a subsequence we may assume that $\tv_n\to\tv$ in $\cV$, hence
\[
\begin{aligned}
&\frac12\int_\Om\langle V(x)(v^+_n+w_n),v^+_n+w_n\rangle\,dx + \int_\Om F(x,v_n+w_n)\; dx\\
&\hspace{1cm}
 \to \frac12\int_\Om\langle V(x)(v^++w),v^++w\rangle\,dx + \int_\Om F(x,v+w)\,dx.
\end{aligned}
\]
By the weakly sequentially lower semicontinuity
$$
\int_\Om\langle V(x)(v^+_n+w_n),v^+_n+w_n\rangle\,dx\to
\int_\Om\langle V(x)(v^++w),v^++w\rangle\,dx
$$
and in view of (L1)
\begin{equation}\label{EqconvinL2}
|v^+_n+w_n|_2 \to |v^++w|_2.
\end{equation}
Since $v^+_n+w_n \weakto v^++w$ in $L^p(\Om,\R^3)$ then, up to a subsequence,
$v^+_n+w_n \weakto v^++w$ in $L^2(\Om,\R^3)$, and by (\ref{EqconvinL2}) we have
$v_n^++w_n\to v^++w$ in $L^2(\Om,\R^3)$. Hence
$$
E_n=v_n+w_n\to E=v+w\hbox{ a.e.\ on }\Om.
$$
Finally observe that
\begin{eqnarray*}
&&\int_\Om F(x,E_n) - F(x,E_n-E)\,dx\\
&&\hspace{1cm}
   = \int_\Om\int_0^1\frac{d}{dt}F(x,E_n+(t-1)E)\,dtdx\\
&&\hspace{1cm}
  = \int_0^1\int_\Om\langle f(x,E_n+(t-1)E),E\rangle\,dxdt.
\end{eqnarray*}
Since $f(x,E_n+(t-1)E)\to f(x,tE)$ a.e.\ on $\Om$ Vitali's convergence theorem yields
\begin{eqnarray*}
&&\int_\Om F(x,E_n)-F(x,E_n-E)\,dx\\
&&\hspace{1cm}
  \to \int_0^1\int_\Om\langle f(x,tE),E\rangle\,dxdt
  = \int_\Om F(x,E)\,dx
\end{eqnarray*}
as $n\to\infty$. Moreover, since $\int_\Om F(x,E_n)\to \int_\Om F(x,E)\,dx$ there holds
\begin{equation}\label{EqConvf}
\int_\Om F(x,E_n-E)\,dx \to 0,
\end{equation}
hence $|E_n-E|_p \to 0$ by \eqref{eqest3}, and finally $w_n\to w$ in $L^p(\Om,\R^3)$. This shows (A3).

c) In order to prove c) we observe that assumptions (F0)-(F3) imply that for any $\eps>0$ there is a constant $c_\eps>0$ such that
\begin{equation*}
\int_\Om F(x,u)\, dx \leq \eps|u|^2_{2} + c_\eps|u|^{p}_{p}\quad
\textnormal{ for any } u\in L^p(\Om,\R^3).
\end{equation*}
Using this and \eqref{NormInU0} we deduce for $v\in\V^+$
\begin{eqnarray*}
\fJ(v)
 &=& \frac12Q(v)-\int_\Om F(x,v)\;dx
 \geq \frac\de2\|v\|_\V^2 -\eps|v|_2- c_\eps|v|_p^p\\
&\geq& \frac\de4\|v\|_\V^2 - C_1\|v\|_\V^p
\end{eqnarray*}
for some constant $\delta,C_1>0$ which proves c).

d) Consider a sequence $(v_n+w_n)_n$ in $X$ such that $\|v_n+w_n\|\to\infty$ as $n\to\infty$ and $(\|v_n^+\|_\cV)_{n}$ is bounded. Then $\|\tv_n+w_n\|^2 = \|\tv_n\|_\cV^2+|w_n|_p^2\to\infty$, hence $|\tv_n+w_n|_p\to\infty$ because $\tcV$ is finite-dimensional. Using (L1), the orthogonality $\cV^+\perp\tcV$, $\cV\perp\cW$ with respect to $(\cdot,\cdot)$ and the H\"older inequality we deduce
\begin{equation}\label{EqOrthU_0}
\|\tv_n\|_\V^2\leq (\tv_n,\tv_n)\leq (v_n+w_n,v_n+w_n)\leq C_1|v_n+w_n|_2^2
 \leq C_2|v_n+w_n|_p^2
\end{equation}
for some constants $C_1,C_2>0$. Now \eqref{eqest3} implies
\[
\fI(v_n+w_n) \geq -\frac{C_2}{2}|v_n+w_n|_p^2 + d'|v_n+w_n|_p^p \to \infty,
\]
and d) follows.

e) Consider sequences $t_n\to\infty$ and $v_n\in\cV$, $w_n\in\cW$ such that $v_n^+\to v_0^+\ne0$ as $n\to\infty$. Note that by \eqref{eqest3} and \eqref{EqOrthU_0}
\[
\fI(t_n(v_n+w_n))/t_n^2 \geq -\frac{C_2}{2}|v_n+w_n|_p^2 +d' t_n^{p-2}|v_n+w_n|_p^p.
\]
If $\|v_n+w_n\|\to\infty$ as $n\to\infty$ then $|v_n+w_n|_p\to\infty$, hence
\begin{equation}\label{eq:LemCoer_d}
\fI(t_n(v_n+w_n))/t_n^2\to \infty
\end{equation}
and we are done. Now suppose $(\|v_n+w_n\|)_n$ is bounded, hence $(|v_n+w_n|_p)_n$ is bounded. If $|v_n+w_n|_p\to0$  then $|v_n+w_n|_2\to 0$ which implies $v_n^+\to 0$ in $L^2(\Om,\R^3)$ contradicting $v_0\neq0$. Therefore $t_n^{p-2}|v_n+w_n|_p\to\infty$ as $n\to\infty$ and again \eqref{eq:LemCoer_d} holds.
\end{proof}

As in Section~\ref{sec:CriticalTheory} we define
\[
\begin{aligned}
\cM &:= \{E\in X: \, \fJ'(E)[\phi+\psi]=0\hbox{ for any }\phi\in\tcV,\,\psi\in\cW\}.
\end{aligned}
\]

\begin{Lem}\label{LemB4check}  Suppose (L1)-(L2), (F0)-(F4) and (F6) hold.

a) $I$ is strictly convex.

b) (B4) holds.
\end{Lem}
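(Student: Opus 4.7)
The plan is to establish strict convexity of $I$ by decomposing it into a convex quadratic form plus the convex integral of $F$, and then deduce (B4) as an immediate consequence of strict convexity together with the stationarity built into $\cM$.

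First I would split $E=v^++\tv+w$ with $v^+\in\cV^+$, $\tv\in\tcV$, $w\in\cW$. Because $\cV^+$ and $\tcV$ are sums of eigenspaces of the curl-curl source problem from Corollary~\ref{Cor:Spectrum} for distinct eigenvalues, they are orthogonal with respect to both $\int_{\Om}\langle V(x)\cdot,\cdot\rangle\,dx$ and $\langle\cdot,\cdot\rangle_{\cV}$; combined with the orthogonality $\int_{\Om}\langle V(x)v,w\rangle\,dx=0$ from \eqref{eq:VperpW}, this lets me rewrite
\[
I(E)=\underbrace{\tfrac12\int_{\Om}\langle V(x)v^+,v^+\rangle\,dx-\tfrac12 Q(\tv)+\tfrac12\int_{\Om}\langle V(x)w,w\rangle\,dx}_{=:Q_0(E)}+\int_{\Om}F(x,E)\,dx.
\]
Each summand of $Q_0$ is nonnegative, the first and third by the uniform positive definiteness of $V$ in (L1), the middle by the definition of $\tcV$ as the semi-negative eigenspace of $Q$; hence $Q_0$ is a convex quadratic form on $X$ whose kernel is exactly $\cV_0$.

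For strict convexity, $E\mapsto\int_{\Om}F(x,E)\,dx$ is convex by (F6)(i), so $I$ is convex. If there were $E_1\ne E_2$ and $\la\in(0,1)$ with $I(\la E_1+(1-\la)E_2)=\la I(E_1)+(1-\la)I(E_2)$, equality in Jensen's inequality would have to hold separately for $Q_0$ and for the $F$-integral. Equality in $Q_0$ forces $E_1-E_2=:v_0\in\ker Q_0=\cV_0$, and equality in the $F$-integral then forces $t\mapsto\int_{\Om}F(x,E_2+tv_0)\,dx$ to be affine on $[0,1]$; applying (F6)(ii) with $u=E_2\in L^p(\Om,\R^3)$ (legitimate since $X\hookrightarrow L^p$ by (L2) on $\cV$ together with the $L^p$-part of the $\cW$-norm) rules this out unless $v_0=0$, producing the needed contradiction.

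For (B4), fix $u\in\cM$, so by the definition of $\cM$ one has $I'(u)|_{\tX}=0$. Given $v\in\tX\setminus\{0\}$, the function $g(t):=I(u+tv)$ is strictly convex by part (a) and satisfies $g'(0)=I'(u)(v)=0$, so $t=0$ is its unique global minimum; in particular $I(u)=g(0)<g(1)=I(u+v)$. The delicate step I expect to require extra care is invoking (F6)(ii) for the nonlinearity $F=F_0\circ\chi$ appearing in $I$, while (F6)(ii) as stated concerns $F_0$; when $\de>0$ in (F0) a short separate check is needed to verify that the truncation $\chi$ does not destroy strict convexity along $\cV_0$-directions, e.g.\ by localizing to the open set $\{x:|E_2+tv_0|>\de\text{ for all }t\in[0,1]\}$ where $\chi$ is a smooth radial shift and lifting strict convexity from $F_0$ to $F$ there.
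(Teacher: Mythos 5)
Your decomposition $I=Q_0+\int_\Om F$, the identification $\ker Q_0=\cV_0$ (using that $\cV^+$, $\tcV$, $\cW$ are mutually orthogonal for both $\langle\cdot,\cdot\rangle_\cV$ and $\int_\Om\langle V\cdot,\cdot\rangle\,dx$), and the Jensen-equality argument forcing $E_1-E_2\in\cV_0$ followed by an appeal to (F6)(ii) reproduce the paper's proof of a) in a unified form; the paper instead splits into cases according to the position of $\lambda=1$ in the Maxwell spectrum ($\lambda_{m-1}<1$, $\lambda_{m-1}=1$, or $m=1$), but the substance is identical. Your deduction of b) from strict convexity, via $g(t)=I(u+tv)$ and $g'(0)=0$, likewise matches the paper's one-line reduction.

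The genuine gap is in your closing sentence: the localization device you sketch for $\delta>0$ cannot work, and the statement is in fact \emph{false} without an extra hypothesis. Take $E_2=0$ and any $v_0\in\cV_0\cap L^\infty(\Om,\R^3)$, rescaled so that $0<|v_0|_\infty<\delta$. Then the open set $\{x:|E_2+tv_0(x)|>\delta \text{ for all } t\in[0,1]\}$ is empty, $F(x,tv_0(x))\equiv 0$ for all $t\in[0,1]$ and a.e.\ $x$, so $t\mapsto I(tv_0)$ vanishes identically on $[0,1]$. Hence $I$ is not strictly convex, and (B4) actually fails, since $0\in\cM$ (because $f(x,0)=0$) while $I(0)=I(v_0)=0$ with $v_0\in\tX\setminus\{0\}$. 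There is nothing to ``lift'' here: the truncation $\chi$ annihilates the nonlinearity on the whole segment. The correct resolution --- and the one the paper in fact uses --- is hypothesis (F7): the paper's proof explicitly writes ``using also $F=F_0$ from (F7)'' exactly at the step where $\cV_0\neq\{0\}$. So (F7) (equivalently $\delta=0$) is an implicit extra hypothesis of this lemma (a small imprecision in its statement), and it is always available where the lemma is invoked, namely in the (F6)--(F7) branch of Theorem~\ref{thm:main} via Theorem~\ref{ThLink2}; in the (F5) branch (B3) is used via Theorem~\ref{ThLink1} and (B4) is never needed. You should replace the localization sketch with the remark that when $\cV_0\neq\{0\}$ one additionally assumes (F7), so $F=F_0$ and (F6)(ii) applies directly.
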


\begin{proof}
a) Observe that if $m=\min\{k\in\N_0:\la_k>1\}\geq 2$ and $\lambda_{m-1}<1$, then $\cV_0=0$ and $-Q$ is strictly convex on $\tcV$. If $\lambda_{m-1}=1$ then $\int_\Om F(x,v)dx$ is strictly convex on the set of all $v\in\cV_0$ by (F6), using also $F=F_0$ from (F7). Finally if $m=1$ then $\tcV=\{0\}$, hence $X=\V^+\oplus \W$ and
$X \ni E \mapsto \int_\Om\langle V(x)E,E\rangle\,dx \in \R$ is strictly convex. Therefore in all cases we obtain that
$$
I(v+w)=-\frac{1}{2}Q(\tv)+ \frac12\int_\Om\langle V(x)(v^++w),v^++w\rangle\,dx
      +\int_\Om F(x,v+w)
$$
is strictly convex.

b) follows from the strict convexity of $I$.
\end{proof}

\begin{Lem}\label{LemB3check}
If (F5) is satisfied then condition (B3) holds.
\end{Lem}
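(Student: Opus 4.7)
The plan is to expand
\[
\Psi(t,u,v) := \frac{t^2-1}{2}\fI'(u)[u] + t\fI'(u)[v] + \fI(u) - \fI(tu+v)
\]
explicitly by exploiting the three-part structure of $\fI$ from Section~\ref{sec:prelim}, namely
\[
\fI(v+w) = -\tfrac12\|\wt v\|_\cV^2 + \tfrac12\!\int_\Om\langle V(x)(v+w),v+w\rangle\,dx + \!\int_\Om F(x,v+w)\,dx,
\]
and then bounding each contribution via the spectral structure of the curl--curl source problem together with (F5). The first step is to decompose $u = u^+ + \wt u + u_\cW$ with $u^+\in\cV^+$, $\wt u\in\tcV$, $u_\cW\in\cW$, and $v = v_\cV + v_\cW \in \tX$ with $v_\cV\in\tcV$, $v_\cW\in\cW$. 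Elementary quadratic-form algebra (using $\tcV\perp\cV^+$ in $\cV$ and the $V$-orthogonality $\cV\perp\cW$ from \eqref{eq:VperpW}) shows that the $\tcV$-quadratic part of $\Psi$ reduces to $\tfrac12\|v_\cV\|_\cV^2$, the $V$-quadratic part reduces to $-\tfrac12\int_\Om\langle V(x)v,v\rangle\,dx$, and the nonlinear part is $\int_\Om\vp_F(t,x,u,v)\,dx$, where $\vp_F$ denotes the expression defining $\vp$ in (F5) with $(F,f)$ replacing $(F_0,f_0)$. Hence
\[
\Psi(t,u,v)=\tfrac12\|v_\cV\|_\cV^2-\tfrac12\!\int_\Om\!\langle V(x)v_\cV,v_\cV\rangle\,dx-\tfrac12\!\int_\Om\!\langle V(x)v_\cW,v_\cW\rangle\,dx+\!\int_\Om\!\vp_F(t,x,u,v)\,dx.
\]

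Next I would bound each piece. Expanding $v_\cV$ in eigenfunctions of \eqref{EgEigenvalue} with eigenvalues $\la_k\le 1$ and using the eigenvalue identity $\int\langle V\phi_k,\phi_k\rangle\,dx=\tfrac1{\la_k}\|\phi_k\|_\cV^2$ yields $\tfrac12\|v_\cV\|_\cV^2-\tfrac12\int\langle Vv_\cV,v_\cV\rangle\,dx\le 0$, with equality iff $v_\cV\in\cV_0$. Uniform positive definiteness of $V$ from (L1) gives $-\tfrac12\int\langle Vv_\cW,v_\cW\rangle\,dx\le 0$, with equality iff $v_\cW=0$. And (F5)(i) applied pointwise yields $\int_\Om\vp_F\,dx\le 0$. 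Summed, $\Psi(t,u,v)\le 0$.

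For the strict inequality when $tu+v\ne u$ I would split into three cases: if $v_\cW\ne 0$, strict positive definiteness of $V$ on $\cW$ produces a strict sign; if $v_\cW=0$ but $v_\cV\notin\cV_0$, the spectral gap on $\tcV\setminus\cV_0$ gives strict inequality; and if $v\in\cV_0$, the first two contributions vanish identically, so I invoke (F5)(ii) with $u\in W_0^p(\curl;\Om)\subset L^p(\Om,\R^3)$, $v\in\cV_0$, and $tu+v\ne u$ to conclude $\int_\Om\vp_F\,dx<0$.

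The main bookkeeping obstacle is the transfer from (F5), which is phrased in terms of $(F_0,f_0)$, to the corresponding inequality for $(F,f)$ in the case $\de>0$ in (F0). When $|u|\le\de$ both $F(x,u)$ and $f(x,u)$ vanish, so $\vp_F(t,x,u,v)=-F(x,tu+v)\le 0$ trivially; for $|u|>\de$ one uses the identity $\chi'(u)u=u$ together with the convexity and nonnegativity of $F_0$ to compare $\vp_F(t,x,u,v)$ with $\vp(t,x,\chi(u),\chi(tu+v)-t\chi(u))$, thereby reducing to (F5). Apart from this, the remaining verification is routine algebra.
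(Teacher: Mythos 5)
Your core argument — expanding $\Psi$ into the $\tcV$-quadratic, the $V$-quadratic, and the nonlinear pieces, using the spectral structure to show the quadratic contributions are $\le 0$ with strict sign precisely off $\cV_0$ (respectively off $0$), and finishing with the three-way case split that falls back on (F5)(ii) when both quadratic contributions vanish — is exactly the paper's proof. The only cosmetic difference is that the paper records the $\tcV$-contribution as $\tfrac12Q(\phi)$ directly rather than via an eigenfunction expansion.

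The extra point you raise, the passage from $(F_0,f_0)$ to $(F,f)$ when $\de>0$ in (F0), is a real subtlety (the paper's proof applies (F5) to $\vp$ written in terms of $F$ and $f$ without comment, which is automatic only for $\de=0$). However, your proposed fix does not close it. Setting $s:=1-\de/|u|\in(0,1)$ for $|u|>\de$ and using $\chi'(u)u=u$, one computes
\[
\vp_F(t,x,u,v)-\vp\bigl(t,x,\chi(u),\chi(tu+v)-t\chi(u)\bigr)
=\frac{t^2-1}{2}\Bigl(\frac1s-1\Bigr)\langle f_0(x,\chi(u)),\chi(u)\rangle
+t\bigl\langle f_0(x,\chi(u)),\chi'(u)v-\chi(tu+v)+t\chi(u)\bigr\rangle.
\]
Since $\langle f_0(x,\chi(u)),\chi(u)\rangle\ge 2F_0(x,\chi(u))\ge0$ by Remark~\ref{rem:convexity}~b), for $t>1$ the first summand is nonnegative and the second has no determined sign, so $\vp(t,x,\chi(u),\cdot)\le0$ from (F5)(i) does \emph{not} imply $\vp_F(t,x,u,v)\le0$. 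Calling this step ``routine algebra'' therefore hides an unresolved difficulty: either a correct transfer argument must be supplied, or one must effectively restrict (F5) to $\de=0$ here, as (F7) does explicitly.
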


\begin{proof}
Let $E\in\cN$, $t\geq 0$, $\phi\in\tcV$, $\psi\in\W$ satisfy $E \ne tE+\phi+\psi$. We need to show that
\begin{equation}\label{eq:B3check}
\begin{aligned}
&\fI'(E)\left[\frac{t^2-1}{2}E+t(\phi+\psi)\right] + \fI(E) - \fI(tE+\phi+\psi)\\
&\hspace{1cm}
 = \frac12Q(\phi) - \frac12\int_\Om\langle V(x)\psi,\psi\rangle\,dx
     + \int_\Om\vp(t,x)\,dx
 < 0
\end{aligned}
\end{equation}
where
\[
\vp(t,x)=\vp(t,x,E,\phi+\psi)
 = \langle f(x,E),\frac{t^2-1}{2}E+t(\phi+\psi)\rangle + F(x,E) - F(x,tE+\phi+\psi).
\]
%
Assumption (F5)(i) yields $\vp(t,x)=\vp(t,x,E,\phi+\psi)\le0$ for any $t\ge0$, a.e.\ $x\in\Om$. If $Q(\phi) < 0$ or $\int_\Om\langle V(x)\psi,\psi\rangle\,dx > 0$ then \eqref{eq:B3check} holds. If neither of these strict inequalities hold then $\phi\in\cV_0$ and $\psi=0$. In that case (F5)(ii) implies $\int_\Om\vp(t,x)\,dx < 0$.
\end{proof}

Now we recall the Nehari-Pankov manifold \eqref{eq:NehariCharact} for $\fJ$ given by
\[
\begin{aligned}
\cN &:= \{E\in X\setminus (\tcV\oplus\cW):\,J(E)>0,\,\fJ'(E)[E]=0\\\nonumber
    &\hspace{2cm}
            \hbox{ and }\fJ'(E)[\phi+\psi]=0\hbox{ for any }\phi\in\tcV,\,\psi\in\cW\}.
\end{aligned}
\]

Next we show that $\fJ$ satisfies the $(PS)_c^\cT$ condition on $\cN$ and on $\cM$ provided (F5) or (F6)-(F7) hold.

\begin{Lem}\label{lemmaconv2}
If (F5) holds then $J$ satisfies the $(PS)_c^\cT$ condition on $\cN$. If (F6)-(F7) hold then $J$ satisfies the $(PS)_c^\cT$ condition on $\cM$.
\end{Lem}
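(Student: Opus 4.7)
My plan in both cases follows the classical three-step scheme: bound the Palais-Smale sequence, extract a weak limit, and upgrade the weak convergence of the positive component $v_n^+\in\cV^+$ to norm convergence. I write $E_n=v_n^++\tilde v_n+w_n$ with $v_n^+\in\cV^+$, $\tilde v_n\in\tcV$, $w_n\in\cW$, and exploit throughout that by (L2) the inclusion $\cV\hookrightarrow L^p(\Om,\R^3)$ is compact while $\dim\tcV<\infty$.

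\textbf{Boundedness.} Under (F6)-(F7) on $\cM$ I will run the standard Ambrosetti-Rabinowitz argument: since $J'(E_n)|_{\tcV\oplus\cW}=0$ and $J'(E_n)[v_n^+]=o(\|E_n\|)$, the identity $2\fJ(E_n)-J'(E_n)[E_n]=\int_\Om(\langle f_0,E_n\rangle-2F_0)\,dx$ combined with (F7) bounds $\int_\Om F_0(x,E_n)\,dx$ linearly in $\|E_n\|$; (F4) then bounds $|E_n|_p$; the relation $\|v_n\|_\cV^2=\int\langle VE_n,E_n\rangle\,dx+\int\langle f_0(x,E_n),E_n\rangle\,dx+J'(E_n)[E_n]$ bounds $\|v_n\|_\cV$; and finally $w_n=E_n-v_n$ is $L^p$-bounded. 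Boundedness of a $(PS)_c$-sequence in $\cN$ under only (F5) is the main technical obstacle, since the borderline estimate $\langle f,u\rangle\ge 2F$ from Remark~\ref{rem:convexity}~b) cannot replace the strict inequality in (F7). Here I argue by contradiction: assume $\|E_n\|\to\infty$ and normalize $u_n=E_n/\|E_n\|$; by (L2) and $\dim\tcV<\infty$, along a subsequence $u_n^+\to u^+$ in $L^p$ and $\tilde u_n\to\tilde u$ strongly. Using the Nehari-type identity $J'(E_n)[E_n]=0$ together with Remark~\ref{rem:convexity}~b), I perform a case analysis: if $u^+\ne 0$, the superquadratic blow-up of Lemma~\ref{LinkingLemma}~e) driven by (F4) forces $\int_\Om F(x,E_n)/\|E_n\|^2\to\infty$, contradicting $2\fJ(E_n)/\|E_n\|^2\to 0$; if $u^+=0$, I combine the maximality of $E_n$ on $\R^+E_n^+/\|E_n^+\|_\cV\oplus\tX$ from Proposition~\ref{prop:nehari} with the coercivity estimate \eqref{eqest3} applied to $tu_n+\phi$ for fixed $t>0$ to contradict $\fJ(E_n)\to c>0$.

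\textbf{Weak limits and strong convergence of $v_n^+$.} Once boundedness is in hand, reflexivity provides, along a subsequence, $v_n^+\weakto v^+$ in $\cV$, $\tilde v_n\to\tilde v$ strongly, $w_n\weakto w$ in $\cW$, and by (L2) $v_n^+\to v^+$ strongly in $L^p$. To upgrade to norm convergence in $\cV^+$ I test the Palais-Smale hypothesis on the increment $v_n^+-v^+\in\cV^+\subset X$:
\begin{equation*}
o(1)=J'(E_n)[v_n^+-v^+]=(v_n^+,v_n^+-v^+)_\cV-\int_\Om\langle V(x)E_n,v_n^+-v^+\rangle\,dx-\int_\Om\langle f(x,E_n),v_n^+-v^+\rangle\,dx.
\end{equation*}
The bilinear term will vanish in the limit because $V\in L^\infty$, $(E_n)$ is bounded in $L^2$, and $v_n^+-v^+\to 0$ in $L^2$; the nonlinear term vanishes because (F3) bounds $(f(x,E_n))$ in $L^{p'}(\Om,\R^3)$ while $v_n^+-v^+\to 0$ in $L^p$. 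Hence $(v_n^+,v_n^+-v^+)_\cV\to 0$, and combined with $v_n^+\weakto v^+$ this forces $\|v_n^+\|_\cV\to\|v^+\|_\cV$, i.e.\ $v_n^+\to v^+$ strongly in $\cV^+$. Together with the weak convergences in $\tcV$ and $\cW$, this is precisely the required $\cT$-convergence $E_n\cTto v^++\tilde v+w$.
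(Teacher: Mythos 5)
Your overall scheme (boundedness, extraction of weak limits, then upgrading $v_n^+$ to norm convergence via the $(PS)$-hypothesis) matches the paper, and for the (F6)--(F7) case your Ambrosetti--Rabinowitz sketch is essentially the paper's argument. Your final step is also sound: since $\cV^+$ and $\tcV$ are orthogonal in $\langle\cdot,\cdot\rangle_\cV$ (being sums of eigenspaces of the compact self-adjoint curl-curl resolvent), $J'(E_n)[v_n^+-v^+]=\langle v_n^+,v_n^+-v^+\rangle_\cV-\int\langle VE_n,v_n^+-v^+\rangle-\int\langle f(x,E_n),v_n^+-v^+\rangle$, and the two integral terms do vanish by (L2), (F3) and H\"older; the paper tests instead with $v_n-v_0$ and obtains the stronger $v_n\to v_0$ in $\cV$, but your weaker conclusion already gives $\cT$-convergence.

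The genuine gap is in the (F5) boundedness argument, which is precisely the part the paper does not spell out (it refers to \cite[Lemma 5.3]{BartschMederski1}). Your case $u^+\ne 0$ does work: $|u_n|_p$ stays bounded away from $0$, so $\fI(\|E_n\|u_n)/\|E_n\|^2\to\infty$ by the mechanism of Lemma~\ref{LinkingLemma}~e), while $\|u_n^+\|_\cV\le 1$ forces $J(E_n)/\|E_n\|^2\to-\infty$, a contradiction. But your treatment of $u^+=0$ is not a proof. First, you must further split according to whether $\|u_n^+\|_\cV$ stays away from $0$. If it does, put $\hat u_n:=u_n^+/\|u_n^+\|_\cV\in SX^+$; since $E_n=n(\hat u_n)$, maximality gives $J(E_n)\ge J(s\hat u_n)=\frac{s^2}{2}-\fI(s\hat u_n)$, and because $\hat u_n\to 0$ in $L^p$ by (L2), (F2)--(F3) force $\fI(s\hat u_n)\to 0$, whence $c\ge s^2/2$ for every $s$ --- a contradiction. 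If instead $\|u_n^+\|_\cV\to 0$ along a subsequence, the direction $\hat u_n$ is no longer controlled, the preceding bound degenerates, and your plan offers nothing to close this subcase. Second, invoking \eqref{eqest3} here is the wrong tool: \eqref{eqest3} is a \emph{lower} bound on the nonnegative part of $\fI$, hence yields an \emph{upper} bound on $J(tu_n+\phi)$, whereas what you need to contradict $J(E_n)\to c$ via maximality is a \emph{lower} bound on $J(tu_n+\phi)$ that diverges. So as written the $u^+=0$ step does not hold up, and this is exactly where the difficulty lies; you need either the finer structure used in \cite[Lemma 5.3]{BartschMederski1} or an a priori coercivity of $J|_\cN$ (which is in fact assumed as a separate hypothesis in Theorem~\ref{ThLink1}) to avoid the degenerate subcase.
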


\begin{proof} Suppose (F5) holds and let $(E_n)_n\in\cN$ be a $(PS)_c$-sequence for $\fJ$ for some $c>0$, i.e.\
$$
\fJ(E_n)\to c \text{ and }\fJ'(E_n)\to0.
$$
Using (L2) and \eqref{eqest3} instead of \cite[(F4)]{BartschMederski1}, the proof follows from similar arguments as in \cite[Lemma 5.3]{BartschMederski1}.

Now assume that (F6)-(F7) holds and let $E_n=v_n+w_n\in\cM$ be a $(PS)_c$-sequence for $\fJ$. We need to show that $E_n\cTto E_0$ in $X$ for some $E_0\in X$ along a subsequence. Using (F7) we obtain
\[
\begin{aligned}
J(E_n)-\frac{1}{2}J'(E_n)(E_n)
 &=\int_{\Om}\frac{1}{2}\langle f(x,v_n+w_n),v_n+w_n\rangle -F(x,v_n+ w_n)\, dx\\
 &\geq \left(\frac{\ga}{2}-1\right)\int_{\Om}F(x,v_n+w_n)\, dx
\end{aligned}
\]
and
\[
\begin{aligned}
&J(E_n)-\frac{1}{\ga}J'(E_n)(E_n)\\
&\hspace{1cm}
 \geq \left(\frac12-\frac1\ga\right)
      \left(\int_{\Om}\langle\mu(x)^{-1}\curlop v_n,\curlop v_n\rangle\,dx
      - \int_\Om \langle V(x)(v_n+w_n),v_n+w_n\rangle\,dx\right)
\end{aligned}
\]
The above inequalities, \eqref{eqest3}, (L1) and the H\"older inequality imply that
\[
2J(E_n)-\left(\frac12-\frac{1}{\ga}\right)J'(E_n)(E_n)
 \geq \left(\frac12-\frac1\ga\right)\|v_n\|_\V^2-C_1|v_n+w_n|_p^2
       +\left(\frac\ga2-1\right)d'|v_n+w_n|^p_{p}
\]
for some constant $C_1>0$. Suppose that $|v_n+w_n|_{p}\to\infty$ as $n\to\infty$. Then for sufficiently large $n$ we have
\begin{eqnarray}\label{eqproofPS1}
2J(E_n)-\Big(\frac12-\frac{1}{\gamma}\Big)J'(E_n)(E_n)
&\geq&
\Big(\frac{1}{2}-\frac{1}{\gamma}\Big)\|v_n\|_\V^2
+\frac12\Big(\frac{\gamma}{2}-1\Big)d'|v_n+w_n|^p_{p}.
\end{eqnarray}
Note that
$\W$ is a closed subspace of $L^p(\Omega,\R^3)$,
$\cl\V\cap\W=\{0\}$ and therefore there is a continuous
projection of $\cl \V\oplus \W$ onto $\W$ in $L^p(\Omega,\R^3)$.
Hence there is a constant $C_2>0$ such that
$|w|_{p}\leq C_2|v+w|_{p}$
for any $v\in\V$ and $w\in \W$. Then \eqref{eqproofPS1} implies that $\|v_n\|_\V$ and $|w_n|_p$ are bounded which contradicts $|v_n+w_n|_{p}\to\infty$. Therefore $|v_n+w_n|_{p}$ must be bounded. By (F2), (F3), for any $\eps>0$ there is $c_\eps>0$ such that
for sufficiently large $n$
\begin{eqnarray*}
\|v_n\|^2_{\V}+\int_{\Om}\langle V(x)v_n,v_n\rangle\,dx&=&J'(E_n)(v_n)
-\int_{\Omega}\langle f(x,v_n+w_n),v_n\rangle\, dx\\
&\leq& \|v_n\|_{\V}+\int_{\Omega}(\eps|v_n+w_n|+c_{\eps}|v_n+w_n|^{p-1})|v_n|\, dx.
\end{eqnarray*}
Since $|v_n+w_n|_p$ is bounded, then the H\"older inequality and the Sobolev embeddings give
\begin{equation}\label{ineqPS3}
\|v_n\|^2_{\V}+\int_{\Om}\langle V(x)v_n,v_n\rangle\,dx
 \leq C_3\|v_n\|_{\V}
\end{equation}
for some constant $C_3>0$. Note that the H\"older inequality implies
$$
\int_{\Om}\langle V(x)v_n,v_n\rangle\,dx
 \leq \int_{\Om}\langle V(x)(v_n+w_n),v_n+w_n\rangle\,dx\leq C_4 |v_n+w_n|_p^2
$$
for some constant $C_4>0$. Therefore by \eqref{ineqPS3} we obtain that
$\|v_n\|_{\V}$ is bounded. In view of (L2), $|v_n|_{p}$ is bounded and then
$|w_n|_{p}$ is bounded.
Therefore $E_n=v_n+w_n$ is bounded in $X$ and we may assume, up to a subsequence,
$$
v_n \weakto v_0\hbox{ in }\V,\;v_n \to v_0 \hbox{ in }L^p(\Om,\R^3)
 \hbox{ and }w_n\weakto w_0\hbox{ in }\W
$$
for some $(v_0,w_0)\in\V\times\W$. Note that
\begin{eqnarray*}
J'(v_n,w_n)[v_n-v_0,0]&=&
 \|v_n-v_0\|_\V^2+\int_\Om\langle\mu^{-1}(x) \curlop v_0,\curlop (v_n- v_0)\rangle\,dx\\
&&      +\int_\Om\langle V(x)(v_n+w_n),v_n-v_0\rangle \,dx
     -\int_\Om\langle f(x,v_n+w_n),v_n-v_0\rangle \,dx.
\end{eqnarray*}
Since $(v_n)_{n}$ is bounded in $\V$, $v_n\to v_0$ in $L^2(\Om,\R^3)$ and
$(f(x,v_n+\nabla w_n))_{n}$ is bounded in $L^{\frac{p}{p-1}}(\Om,\R^3)$ we deduce
$\|v_n-v_0\|_\V\to 0$.
\end{proof}

\begin{altproof}{Proposition~\ref{prop:0isolated} b)}
If (F5) holds, then we easily conclude from the fact $\inf_{\cN}J>0$; see Theorem \ref{ThLink1}~a).

Suppose (F6)-(F8) hold, and assume by contradiction that there exists a sequence of nontrivial solutions $E_n=v_n+w_n \in\cV \oplus \cW$ such that
\[
J(E_n) = \frac12\int_\Om\langle\mu(x)^{-1}\curlop E_n,\curlop E_n\rangle\, dx
          - \frac12\int_\Om \langle V(x)E_n, E_n\rangle\, dx - \int_\Om F(x,E_n)\,dx \to 0.
\]
Then clearly $(E_n)$ is a Palais-Smale sequence in $\cM$ at level $0$. Now Lemma~\ref{lemmaconv2} implies $E_n\cTto E_0=v_0+w_0$ for some $E_0=v_0+w_0\in X$. Then as in the proof of Theorem~\ref{ThLink2} we get $E_n=m(v_n^+)\to m(v_0^+)=E_0$, so $E_0$ is a critical point of $J$. From
\[
o(1) = J(E_n) = \int_\Om \frac12\langle f(x,E_n),E_n\rangle - F(x,E_n)\,dx
 \ge \left(\frac\ga2-1\right) \int_\Om F(x,E_n)\,dx \ge 0\
\]
it follows that $\int_\Om F(x,E_n)\,dx \to 0$, so $\int_\Om F(x,E_0)\,dx=0$, hence $F(x,E_0(x))=0$ and $f(x,E_0(x))=0$ for a.e.\ $x\in\Om$. This implies
\[
0 = J'(E_n)[v_0^+] \to Q(v_0^+)
\]
which yields $v_0^+=0$. Similarly we obtain $w=0$ and finally $E_0\in\cV_0$. Now (F6)(ii) implies $E_0=0$.

Using $J'(E_n)[E_n]=0$, (F8) and (B4) we obtain
\begin{eqnarray*}
\|v_n^+\|^2
&=&  -Q(\tv_n)+
\int_\Om \langle V(x)(v^+_n+w_n, v^+_n+w_n\rangle\, dx + \int_\Om \langle f(x,E_n),E_n\rangle\,dx\\
&\leq& \Big(1-\frac\eta2\Big)\Big(-Q(\tv_n)+
\int_\Om \langle V(x)(v^+_n+w_n, v^+_n+w_n\rangle\, dx\Big)\\
&&+\eta\Big(-\frac12Q(\tv_n)+\frac12
\int_\Om \langle V(x)(v^+_n+w_n, v^+_n+w_n\rangle\, dx + \int_\Om F(x,E_n)\,dx\Big)\\
&=&\Big(1-\frac\eta2\Big)\Big(-Q(\tv_n)+
\int_\Om \langle V(x)(v^+_n+w_n, v^+_n+w_n\rangle\, dx\Big)+ \eta I(E_n)\\
&\leq& \Big(1-\frac\eta2\Big)\Big(-Q(\tv_n)+
\int_\Om \langle V(x)(v^+_n+w_n, v^+_n+w_n\rangle\, dx\Big)+ \eta I(v_n^+)\\
&\leq &\int_\Om \langle V(x)v^+_n, v^+_n\rangle\, dx+\eta\int_\Om F(x,v_n^+)\,dx.
\end{eqnarray*}
Therefore by \eqref{NormInU0} and (F2)-(F3), for any $\eps>0$ there exists $c_\eps>0$ such that
$$
\Big(1-\frac{1}{\lambda_m}\Big)\|v_n^+\|_{\cV}^2
 \leq Q(v_n^+)
 \leq \eta\int_\Om F(x,v_n^+)\,dx\leq \eps\eta |v_n^+|_2^2 +c_{\eps}\eta|v_n^+|_p^p
 \leq \eps \eta C\|v_n^+\|_{\cV}^2 +c_{\eps}\eta C\|v_n^+\|^p_{\cV}
$$
for some constant $C>0$. This contradicts $v_n^+\to v_0^+=0$.
\end{altproof}

\begin{altproof}{Theorem~\ref{thm:main}}
As a consequence of Lemmas~5.2-5.5 we may apply Theorem~\ref{ThLink1} in case (F5) holds, and Theorem~\ref{ThLink2} in case (F6)-(F7) holds. If (F5) holds then the solution is automatically a least energy solution being the minimizer of $J$ on the Nehari-Pankov manifold $\cN$. If (F6)-(F8) hold then the existence of a least energy solution is an immediate consequence of the $(PS)_c^\cT$ condition on $\cM$ and of Proposition~\ref{prop:0isolated}. Indeed, take a sequence of nontrivial critical points $(E_n)$ such that
$$J(E_n)\to c:=\inf\{J(E): E\in X\setminus\{0\}\hbox{ and } J'(E)=0\}>0$$
as $n\to\infty$. Since $(E_n)\subset \cM$, then by Lemma \ref{lemmaconv2} we find $E_0$ such that passing to a subsequence $E_n\cTto E_0$. In view of (A2) one has $J(E_0)\leq c$. On the other hand by (B4) we get $J(E_0)\geq c$. Thus $E_0$ is a least energy solution.

\end{altproof}

\section{Proof of Theorem~\ref{thm:sym1}}\label{sec:proof-sym}

Since $\Om$ is invariant under $G = O(2)\times{1}\subset O(3)$ we can define an action of $g \in G$ on $E\in L^2(\Om,\R^3)$ by setting
\[
(g*E)(x) := g\cdot E(g^{-1}x).
\]

\begin{Prop}\label{prop:sym}
If (S) holds then the action of $G$ on $X$ is isometric and leaves $\cV$ and $\cW$ invariant. Moreover, $J$ is invariant.
\end{Prop}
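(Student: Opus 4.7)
The entire statement reduces to one algebraic identity: for $g\in O(3)$, the action $(g*E)(x)=g\,E(g^{-1}x)$ satisfies
\[
\curlop(g*E) = (\det g)\,(g*\curlop E),
\]
which is just the pseudovector transformation rule for $\curlop$ and can be checked by a direct index computation using $\epsilon_{ijk}g_{ip}g_{jq}g_{kr}=(\det g)\epsilon_{pqr}$. Once this is in hand, everything else is bookkeeping via change of variables combined with (S).

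The plan for the isometry on $\cV$: expand $\|g*v\|_\cV^2$, cancel the factor $(\det g)^2=1$, and substitute $y=g^{-1}x$ to obtain an integral whose integrand is $\langle g^T\mu(gy)^{-1}g\cdot\curlop v(y),\curlop v(y)\rangle$. Condition (S), which simultaneously encodes invariance ($\mu(gy)=\mu(y)$, taking $g_2=e$) and commutation ($g\mu=\mu g$, taking $g_1=e$), yields $g^T\mu(gy)^{-1}g=\mu(y)^{-1}$ and thus preserves the norm. The $L^p$-isometry on $\cW$ is immediate from $|gu|=|u|$ and $g\Om=\Om$. Invariance of $\cW$ is immediate from the curl transformation rule; for invariance of $\cV$ I would observe that if $\varphi\in\cC^\infty_0(\Om,\R^3)$ with $\curlop\varphi=0$, then $g^{-1}*\varphi$ is another such test function, and the change of variables $x=gy$ applied to $\int_\Om\langle V(x)(g*v)(x),\varphi(x)\rangle\,dx$, together with the analogous identity for $V$, returns $\int_\Om\langle V(y)v(y),(g^{-1}*\varphi)(y)\rangle\,dy=0$.

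For the invariance of $J$ I would treat the three terms in $J(g*E)$ separately. The curl-term coincides with $\tfrac12\|g*E\|_\cV^2$ (since $\curlop E=\curlop v$ for $E=v+w$ with $w\in\cW$), already handled. The $V$-term transforms to $\int_\Om\langle V(y)E(y),E(y)\rangle\,dy$ by the same device. For the nonlinear term, change of variables gives $\int_\Om F(gy,gE(y))\,dy$, which equals $\int_\Om F(y,E(y))\,dy$ by the invariance of $F_0$ in (S), after noting that the truncation $\chi$ from (F0) commutes with $g$ (because $|gu|=|u|$ gives $\chi(gu)=g\chi(u)$). The only delicate step, and the one I would expect to demand the most care, is keeping the two halves of (S) consistently paired in the central identity $g^T\mu(gy)^{-1}g=\mu(y)^{-1}$ (and its analog for $V$); once that is in place everything else is routine.
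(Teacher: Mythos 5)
Your proof follows the same route as the paper: establish the curl transformation rule, change variables, and apply the two halves of (S) (invariance and commutation) to handle $\mu$ and $V$. You are in fact slightly more careful than the paper, which writes $\curlop(g*E)=g*(\curlop E)$ and omits the $\det g$ factor — relevant since $G=\O(2)\times\{1\}$ contains orientation-reversing elements — and which does not explicitly note that the truncation $\chi$ from (F0) commutes with $g$; both points are harmless (as you observe, $(\det g)^2=1$), but your version is cleaner.
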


\begin{proof}
Clearly $G$ defines an isometric action on every $L^q(\Om,\R^3)$, in particular on $L^2(\Om,\R^3)$. A direct computation shows that $\curlop(g*E) = g*(\curlop E)$ holds for every $E\in\cC^1(\Om,\R^3)$. Since $\cC^1(\Om,\R^3)$ is dense in $H_0(\curl;\Om)$ it follows that $G$ induces an isometric action on $H_0(\curl;\Om)$, and $\curlop(g*E) = g*(\curlop E)$ holds for $E\in H_0(\curl;\Om)$ in the distributional sense. It also follows that $G$ induces an isometric action on $W^p(\curl;\Om)$. In order to see that $\cV$ is invariant we choose $v\in\cV$, $g\in G$ and $\varphi\in \cC^\infty_0(\Om,\R^3)$ with $\curlop\varphi=0$, and we compute:
\[
\begin{aligned}
&\int_\Om\langle V(x)(g*v)(x),\varphi(x)\rangle\,dx
 = \int_\Om\langle V(x)g\cdot v(g^{-1}x),\varphi(x)\rangle\,dx
 = \int_\Om\langle g\cdot V(x)v(g^{-1}x),\varphi(x)\rangle\,dx\\
&\hspace{1cm}
 = \int_\Om\langle V(x)v(g^{-1}x),g^{-1}\cdot\varphi(x)\rangle\,dx
 = \int_\Om\langle V(y)v(y),g^{-1}\cdot\varphi(gy)\rangle\,dy\\
&\hspace{1cm}
 = \int_\Om\langle V(y)v(y),g^{-1}*\varphi(y)\rangle\,dy
 = 0
\end{aligned}
\]
Here we used that $g$ commutes with every $V(x)$, that $g\in G$ is orthogonal, that $V$ is invariant with respect to the action of $G$ on $\Om$, that $\curlop(g^{-1}*\varphi)=g^{-1}*(\curlop\varphi)=0$, and that $v\in\cV$. It follows that $g*v\in\cV$. Clearly we also have
\[
\int_\Om\langle V(x)(g*E)(x),(g*E)(x)\rangle\,dx
 = \int_\Om\langle V(x)E(x),E(x)\rangle\,dx
\]
so that $\|g*v\|_\cV=\|v\|_\cV$ for $v\in\cV$. In a similar but easier way one sees that $G$ leaves $\cW$ invariant and preserves the norm.

In order to prove the invariance of $J$ with respect to the action of $G$ we use that $g\in G$ commutes with each $\mu(x)$, and that $\mu$ is $G$-invariant:
\[
\begin{aligned}
&\int_\Om\langle\mu(x)^{-1}\curlop(g*E)(x),\curlop(g*E)(x)\rangle\,dx\\
&\hspace{1cm}
 = \int_\Om\langle\mu(x)^{-1}g\cdot(\curlop E)(g^{-1}x),g\cdot\curlop E(g^{-1}x)\rangle\,dx\\
&\hspace{1cm}
 = \int_\Om\langle g\cdot\mu(x)^{-1}(\curlop E)(g^{-1}x),g\cdot\curlop E(g^{-1}x)\rangle\,dx\\
&\hspace{1cm}
 = \int_\Om\langle \mu(x)^{-1}(\curlop E)(g^{-1}x),\curlop E(g^{-1}x)\rangle\,dx\\
&\hspace{1cm}
 = \int_\Om\langle \mu(x)^{-1}(\curlop E)(x),\curlop E(x)\rangle\,dx
\end{aligned}
\]
Clearly we also have
\[
\int_\Om F(x,g*E)\,dx = \int_\Om F(x,E)\,dx.
\]
It follows that $J$ is invariant.
\end{proof}

Let $X^G=\cV^G\oplus\cW^G$ consist of all $G$-equivariant vector fields. By the principle of symmetric criticality, a critical point of the constrained functional $J|_{X^G}$ is a critical point of $\fJ$. Observe that
$$\cW_1:=\{w=\nabla \phi: \phi\in W^{1,p}_0(\Om)\}$$ is a closed subspace of $\cW$ and $\cW=\cK\oplus\cW_1$, where
\begin{eqnarray*}
\cK&:=&\left\{u\in \cW: \int_\Om\langle V(x) u,w\rangle\,dx=0
        \text{ for any }w\in \cW_1\right\}\\
    &=& \{u\in \cW: \div(V(x)u)=0 \}.
\end{eqnarray*}
It is easy to see that $\cK$ and $\cW_1$ are $G$-invariant. Let
\[
\cV_1=\cV\oplus\cK=\{E\in X:\div(V(x)E)=0\}
\]
and let $\cV_1^G=\V^G\oplus\cK^G$ consist of all $G$-equivariant vector fields as above.

We need the following lemma.
\begin{Lem}\label{LemDecomposition}
If (S) holds then any $E\in X^G$ has a unique decomposition $E=E_\tau+E_\rho+E_\zeta$ with summands of the form
\begin{equation*}
E_\tau(x)
 = \al(r,x_3)\begin{pmatrix}-x_2\\x_1\\0\end{pmatrix},\;
E_\rho(x)
 = \be(r,x_3)\begin{pmatrix}x_1\\x_2\\0\end{pmatrix},\;
E_\zeta(x)
 = \ga(r,x_3)\begin{pmatrix}0\\0\\1\end{pmatrix},
\end{equation*}
where $r=\sqrt{x_1^2+x_2^2}$. If $E\in\cV_1^G$ then $E_\tau,E_\rho+E_\zeta\in\cV_1^G$. If $E\in\cW_1^G$ then $E_\tau=0$. Moreover
\begin{equation}\label{eq:S-isom1}
\begin{aligned}
&\langle\mu(x)^{-1}\curlop  E_\rho(x),\curlop E_\tau(x)\rangle
 = \langle\curlop  E_\rho(x),\mu(x)^{-1}\curlop E_\tau(x)\rangle\\
&\hspace{1cm}
  =\langle\curlop E_\tau(x),\mu(x)^{-1}\curlop E_\zeta(x)\rangle
  = \langle\mu(x)^{-1}\curlop E_\tau(x),\curlop E_\zeta(x)\rangle = 0
\end{aligned}
\end{equation}
for a.e.\ $x\in\Om$.
\end{Lem}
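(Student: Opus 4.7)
My plan is to exploit cylindrical coordinates throughout. Let $\hat r(x) = (x_1/r, x_2/r, 0)$, $\hat\theta(x) = (-x_2/r, x_1/r, 0)$, and $\hat e_3 = (0,0,1)$ for $r = \sqrt{x_1^2+x_2^2} > 0$; these are pointwise orthonormal. First I would show that any field with the appropriate cylindrical symmetry can be written a.e.\ as $E = E_r\hat r + E_\theta\hat\theta + E_{x_3}\hat e_3$ with the component functions $E_r, E_\theta, E_{x_3}$ depending only on $(r,x_3)$. Setting $\beta := E_r/r$, $\alpha := E_\theta/r$, and $\gamma := E_{x_3}$ yields precisely $E_\rho, E_\tau, E_\zeta$; the uniqueness is immediate from the pointwise linear independence of the vectors $(x_1,x_2,0)$, $(-x_2,x_1,0)$, $(0,0,1)$ off the measure-zero axis $r=0$.

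Next I would check the stability of $\cV_1^G$ under projection onto $E_\tau$. Using the block form \eqref{eq:formOfeps1} of $V(x)$ guaranteed by (S), one has $V(x)E_\tau(x) = a(x)\alpha(r,x_3)(-x_2,x_1,0)$. A direct computation, using only that $a$ and $\alpha$ are functions of $(r,x_3)$, gives $\div(V(x)E_\tau) = 0$ pointwise (the $\partial_{x_1}$ and $\partial_{x_2}$ contributions cancel by symmetry, and the $x_3$-derivative vanishes because the third component is zero). Therefore $E_\tau \in \cV_1^G$, and hence $E_\rho + E_\zeta = E - E_\tau \in \cV_1^G$ whenever $E \in \cV_1^G$. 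For the $\cW_1^G$ statement, I would note that $w = \nabla\phi$ for some $\phi \in W^{1,p}_0(\Om)$; averaging $\phi$ over $G$ shows $\phi$ can be chosen $G$-invariant, hence $\phi$ depends only on $(r, x_3)$, so $\nabla\phi$ has no $\hat\theta$-component and $E_\tau = 0$.

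For the orthogonality relations \eqref{eq:S-isom1} I would compute the three curls using $\curlop(\phi F) = \nabla\phi \times F + \phi\,\curlop F$. A short calculation gives
\[
\curlop E_\tau = -\alpha_{x_3}(x_1,x_2,0) + (r\alpha_r + 2\alpha)(0,0,1),
\]
which lies in $\mathrm{span}(\hat r, \hat e_3)$, while
\[
\curlop E_\rho = \beta_{x_3}(-x_2,x_1,0), \qquad
\curlop E_\zeta = -\tfrac{\gamma_r}{r}\,(-x_2,x_1,0)\cdot r,
\]
both of which are scalar multiples of $\hat\theta$. Since $\mu(x)$ has the block form \eqref{eq:formOfeps1}, $\mu^{-1}$ preserves both subspaces $\mathrm{span}(\hat r, \hat e_3)$ and $\mathrm{span}(\hat\theta)$ pointwise. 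Hence $\mu^{-1}\curlop E_\tau$ remains orthogonal to $\curlop E_\rho$ and $\curlop E_\zeta$ at each $x$, and symmetrically with the roles swapped, giving \eqref{eq:S-isom1}.

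The calculations are otherwise routine; the main conceptual obstacle is checking that (S) is the precise amount of symmetry needed for the tensors $V$ and $\mu$ to respect the decomposition into these three sectors, specifically that block form \eqref{eq:formOfeps1} decouples the toroidal ($\hat\theta$) sector from the meridional ($\hat r, \hat e_3$) sector. Once this is in place, both the divergence-free property and the curl orthogonality reduce to direct pointwise calculations, and the $H^1_0$-regularity assertion in Theorem~\ref{thm:sym1}(a) then follows because for $E = E_\tau \in \cV_1^G$ the Cartesian components $\alpha(r,x_3)x_i$ automatically satisfy $\div E_\tau = 0$, bringing the field into the class $X_N(\Om, \mathrm{id})$ treated in \cite{Amrouche}.
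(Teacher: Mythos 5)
Your proof is correct and, in fact, more detailed than the paper's own, which simply cites Lemma~1 of \cite{BenForAzzAprile} for the basic decomposition of $G$-equivariant fields and then only records the one-line observation $\div(V(x)v_\tau)=0$ to justify $v_\tau\in\cV_1^G$; the paper gives no argument at all for the $\cW_1^G$ claim or for the orthogonality relations \eqref{eq:S-isom1}. Your route supplies exactly the computations the paper leaves implicit: the cylindrical-frame representation with components depending only on $(r,x_3)$, the explicit curls showing that $\curlop E_\tau\in\span(\hat r,\hat e_3)$ while $\curlop E_\rho,\curlop E_\zeta\in\span(\hat\theta)$, and the observation that a matrix of the block form \eqref{eq:formOfeps1} preserves both of these pointwise subspaces, which is what makes $\mu(x)^{-1}$ harmless in \eqref{eq:S-isom1}. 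The $\cW_1^G$ step via a $G$-invariant (hence angle-independent) potential $\phi$ is also a genuine addition. One small slip: your formula for $\curlop E_\zeta$ has a stray factor ``$\cdot\,r$''; the correct coefficient is $-\gamma_r/r$, but the direction $(-x_2,x_1,0)\parallel\hat\theta$ is right, so the orthogonality argument is unaffected. Finally, note that your derivation (like the decomposition itself) really only uses $SO(2)$-equivariance: the reflection $\mathrm{diag}(1,-1,1)\in O(2)$ sends $E_\tau\mapsto -E_\tau$, so strict $O(2)\times\{1\}$-equivariance would force $E_\tau=0$. Since Theorem~\ref{thm:sym1}~a) and the involution $S_1$ require nonzero $E_\tau$, the paper's $X^G$ must be read as the $SO(2)\times\{1\}$-fixed space, and your proof is consistent with that reading.
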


\begin{proof}
The decomposition has been constructed in \cite[Lemma~1]{BenForAzzAprile} for vector fields $E\in\cD^1(\R^3)^G$. It extends immediately to $\cV_1^G$ and $\cW_1^G$.
Assumption (S) implies that $V$ depends only on $(r,x_3)$ and that $\div(V(x)v_\tau)=0$. Thus $v_\tau \in \V^G_1$ and $v_\rho+ v_\zeta=v-v_\tau\in\V^G_1$.
\end{proof}

\begin{altproof}{Theorem \ref{thm:sym1}}
In view of Lemma \ref{LemDecomposition} the maps
\[
S_1:X^G \to X^G, \quad S_1(E_\tau+E_\rho+E_\zeta) := E_\tau-E_\rho-E_\zeta
\]
and
\[
S_2:X^G \to X^G, \quad S_2(E_\tau+E_\rho+E_\zeta, w) := -E_\tau+E_\rho+E_\zeta
\]
are well-defined linear isometries, and $S_1^2=S_2^2=\id$. It is easy to see that $J$ is invariant under $S_2$, provided (S) holds, of course. Moreover, if $F$ is in addition even then $J$ is also invariant under $S_1$. By the principle of symmetric criticality it is sufficient to find critical points of $J$ constrained to either
\[
(X^G)^{S_1} := \{E\in X^G: S_1(E)=E\} = \{E\in X^G: E=E_\tau\}\subset\cV_1,
\]
or to
\[
(X^G)^{S_2} := \{E\in X^G: S_2(E)=E\} = \{E\in X^G: E=E_\rho+E_\zeta\}.
\]
This can be done with the methods from Section~\ref{sec:proof-main} using Theorem~\ref{ThLink1} and Theorem \ref{ThLink2}. Observe that in Theorem \ref{thm:sym1}~a) we do not assume (L2) because $(X^G)^{S_1} = \{E\in X^G: E=E_\tau\} \subset H^1_0(\Om,\R^3)$ embeds compactly into $L^p(\Om,\R^3)$.
\end{altproof}

{\bf Acknowledgement.} We thank the reviewer for his/her careful reading.

{\sc Address of the authors:}\\[1em]
\parbox{8cm}{Thomas Bartsch\\
 Mathematisches Institut\\
 Universit\"at Giessen\\
 Arndtstr.\ 2\\
 35392 Giessen\\
 Germany\\
 Thomas.Bartsch@math.uni-giessen.de}
\parbox{10cm}{
\vspace{5mm}
 Jaros\l aw Mederski\\
 Nicolaus Copernicus University \\
 Faculty of Mathematics and Computer Science\\
 ul.\ Chopina 12/18\\
 87-100 Toru\'n\\
 Poland\\
 jmederski@mat.umk.pl\\
 }

\end{document}